\newtheorem{theorem}{Theorem}[section]
\newtheorem{proposition}[theorem]{Proposition}
\newtheorem{corollary}[theorem]{Corollary}
\newtheorem{lemma}[theorem]{Lemma}
\newtheorem{remark}[theorem]{Remark}
\newtheorem{definition}[theorem]{Definition}
\newtheorem{fact}[theorem]{Fact}
\newcommand{\T}{\mathbb{T}}
\newcommand{\Z}{\mathbb{Z}}
\newcommand\A{\mathcal{A}}
\newcommand\iA{\mathcal{IA}}
\newcommand\D{\mathcal{D}}
\newcommand\fN{\mathcal{N}}
\newcommand\DD{\mathbf{D}}
\newcommand\NN{\mathbf{N}}
\newcommand\NZ{\mathbf{N_0}}
\newcommand\AR{\mathbf{A}}
\newcommand\wD{\mathbf{wD}}
\newcommand\iAR{\mathbf{IA}}
\newcommand\F{\mathcal{F}}
\newcommand{\N}{\mathbb{N}}
\newcommand{\supp}{\mathrm{supp}}
\def\T{{\mathbb T}}
\def\iI{\mathcal{I}}
\def\A{\mathcal{A}}
\def\F{\mathcal{F}}
\def\Z{{\mathbb Z}}
\def\N{{\mathbb N}}
\def\R{{\mathbb R}}
\def\Q{{\mathbb Q}}
\begin{document}
\title[When ideals properly extend the class of Arbault sets]{When ideals properly extend the class of Arbault sets}
\subjclass[2010]{Primary: 43A46 Secondary: 03E05, 03E15, 22B05}
\keywords{Circle group, thin set, strongly non translation invariant ideal, $\iI$-characterized subgroup, $\iI$-Arbault sets, $\NN$-sets (sets of absolute convergence)}
\author{Pratulananda Das}

\address{Department of Mathematics, Jadavpur University, Kolkata-700032, India}
\email {pratulananda@yahoo.co.in}

\author{Ayan Ghosh}
\address{Department of Mathematics, NIT Durgapur, Paschim Bardhaman-713209, India}
\email {aghosh.maths@nitdgp.ac.in}

\begin{abstract}
In this article we continue the investigation of generalized version of Arbault sets, that was initiated in [Das et al., Bul. Sci. Math. 179 (2022), 103157] but look at the picture from the most general point of view where ideals come into play. While Arbault sets can be naturally associated with the Frechet ideal $Fin$, in [Das et al., Bul. Sci. Math. 179 (2022), 103157] it was observed that when $Fin$ is replaced by the natural density ideal $\iI_d$ one can obtain a strictly larger class of trigonometric thin sets containing Arbault sets. From the set theoretic point of view a natural question arises as to whether one can broaden the picture and specify a class of ideals (instead of a single ideal) each of which would have the similar effect on the classical notion. As a natural candidate, we focus on a special class of ideals, namely, non-$snt$ ideals with a specific property ($snt$ stands for ``strongly non translation invariant"). This class happens to be quite large and rich as it properly contains the class of all dense translation invariant ideals ($\varsupsetneq Fin$), ideals generated by simple density functions as also certain non-negative regular summability matrices (but not all) which can be seen from [Das et al., Annals of Pure and Applied Logic 174 (2023), 103289]. We consider the resulting class of $\iI$-Arbault sets and it is observed that for each such ideal, the class of $\iI$-Arbault sets not only properly contains the class of classical Arbault sets but also a large subfamily of $\NN$-sets (also called ``sets of absolute convergence") while being contained in the class of weak Dirichlet sets.
%
\end{abstract}
\maketitle
\section{Introduction and the background}

Throughout $\R$, $\Q$, $\Z$, $\T$ and $\N$ will stand for the set of all real numbers, the set of all rational numbers,
the set of all integers, the set of all complex numbers on the unit circle and the set of all positive natural numbers respectively. The first three are equipped with their usual abelian group structure and the circle group $\T=\R/\Z$ is equipped with the operation of addition mod 1. Following \cite{Ka1,Ka}, we may identify $\T$ with the interval [0,1] identifying 0 and 1. Any real valued function $f$ defined on $\T$ can be identified with a periodic function defined on the whole real line $\R$ with period 1, i.e., $f(x+1)=f(x)$ for every real $x$. When referring to a set $X\subseteq \T$ we assume that $X\subseteq [0,1]$ and $0\in X$ if and only if $1\in X$.\\

Our principal interest in this article is in trigonometric thin sets. These sets came into prominence initially for their association with trigonometric series \cite{Ca1} and then Fourier analysis.
%

One can have a nice and brief overview of the initial progress, several modifications and generalizations in this direction from \cite{Lu,Yo} and also the survey articles \cite{BKR,LP} (for a general view) and one should note that though historically trigonometric thin sets came into play while trying to understand the seemingly ``bad" sets beyond which absolute convergence happens, precisely the families of exceptional sets considered in trigonometric series theory or Fourier analysis, they themselves became objects of serious investigation over the years.

Of particular interest have been ``thin sets" like Arbault sets, Dirichlet sets, $\NN$-sets (also called a set of absolute convergence), $\NZ$-sets  which have been extensively investigated in the literature. There is an axiomatic approach to define trigonometric thin sets in general parlance. Following \cite{BKR} we define a family $\F$ of subsets of $\T$ to be a family of thin sets if the following conditions hold:
\begin{itemize}
\item[ (a) ] For each $x\in\T$, $\{x\}\in\F$,
\item[ (b) ] if $Y\subseteq X \in\F$ then $Y\in\F$,
\item[ (c) ] $\F$ does not contain any open interval.
\end{itemize}

From the family $\F$ one may construct a new family $\F_\sigma$ by
$$
\F_\sigma=\big\{ \bigcup\limits_{n=1}^\infty X_n: X_n\in\F \big\}.
$$
%
%
%
%
A subfamily $\mathcal{G}\subseteq \F$ is called a basis for $\F$ if for any $X\in\F$ there exists $Y\in\mathcal{G}$ such that $X\subseteq Y$. If the basis $\mathcal{G}$ consists of $F_\sigma$-sets, Borel sets, etc., then the basis is called $F_\sigma$ basis, Borel basis, etc. 

The arithmetic difference of two sets $A,B\subseteq \T$ is defined as
$$
A-B=\{z\in\T: z=x-y \mbox{ for some } x\in A,\ y\in B\}.
$$
A family of thin sets $\F$ is called trigonometric if for any $A\in\F$, $A-A\in\F$.\\

For a real $x$, we denote its fractional part by $\{x\}$ and $\|x\|$ the distance from the integer i.e. $\min\big\{\{x\},1-\{x\}\big\}$. It is well known that
$$
2\|x\| \leq |\sin\pi x| \leq \pi \|x\|.
$$

As is customary we will use the inequality by replacing $|\sin\pi x|$ and $\|x\|$ one by another, wherever required to serve our purpose. Let us now recall some definitions of classical thin sets which have been of much interest (some equivalent definitions can be found in \cite{BKR,BuBu1,E1,Ka1}).\\

A set $X\subseteq [0,1]$ is called
\begin{itemize}
\item [ (1) ] a $\DD$-set (Dirichlet set) if there exists an increasing sequence of naturals $(a_n)$ such that $\|a_nx\|$ converges uniformly to 0 on $X$.
\item [ (2) ] an $\AR$-set (Arbault set) if there exists an increasing sequence of naturals $(a_n)$ such that $\|a_nx\|$ converges pointwise to 0 on $X$.
\item [ (3) ] an $\NN$-set if there exists a sequence of naturals $(a_n)$ and a sequence of non-negative reals $(r_n)$ such that $\sum\limits_{n=1}^\infty r_n=\infty$ and $\sum\limits_{n=1}^\infty r_n \|a_nx\| < \infty$ for all $x\in X$.
\item [ (4) ] an $\NZ$-set if there exists a sequence of naturals $(a_n)$ such that $\sum\limits_{n=1}^\infty \|a_nx\| < \infty$ for all $x\in X$.
\item [ (5) ] a $\wD$-set (weak Dirichlet set) if $X$ is universally measurable and for every positive Borel measure $\mu$ on $[0,1]$ there exists an increasing sequence of naturals $(a_n)$ such that
    $$
    \lim\limits_{n\to\infty} \int_{X} |e^{2\pi i a_nx}-1|d\mu(x)=0.
    $$
    \\
\end{itemize}

The family of all $\DD$-sets, $\NN$-sets, $\NZ$-sets, $\AR$-sets, 
$\wD$-sets will be denoted by $\D$, $\fN$, $\fN_0$, $\A$, 
$w\D$ respectively. It is known that $\D$, $\fN$, $\fN_0$, $\A$ are the typical examples of families of trigonometric thin sets with a Borel basis (see \cite{Ar}). There are well known results about these classical families of trigonometric thin sets (for exact references and proofs we refer to the survey paper \cite{BKR}, see also \cite{Ar,BaS,BuBu1,Ka,Ko,R} as also \cite{DGT}):
\begin{eqnarray*}
 &(R1)& \D\subsetneq \fN_0\subsetneq \A=\mathcal{R}\subsetneq w\D , \\
 &(R2)& \D\subsetneq \fN_0\subsetneq \fN\subsetneq w\D , \\
\end{eqnarray*}

Very recently in \cite{DGT} we had considered the class of trigonometric thin sets formed by statistically characterized subgroups \cite{DDB}  as basis, which we named, statistical Arbault sets and showed that it forms a new class of thin sets properly containing the class of classical Arbault sets among several interesting results.

At this particular point a prominent set theoretical object, namely ``ideals" come into picture, the motivation for which is now explained below.

A non-empty family $\iI \subset$ $\mathcal{P}(\N)$ is called an ideal on $\N$ whenever
\begin{itemize}
\item $\N \notin \iI$,
\item if $A,B \in \iI$ then $A\cup B \in \iI$,
\item $A \subset B$ and $B \in \iI$ then $A \in \iI$.
\end{itemize}
An ideal $\iI$ is called admissible (or free) if $\{n\}\in \iI$ for all $n\in\N$. If $\iI$ is an ideal on $\N$, by $\iI^*$ we denote the dual
filter, that is, $\iI^* = \{\N\setminus A\colon A\in \iI\}$ whereas $\iI^+ = \{B\subseteq \N: B\notin \iI\}$ is called the coideal of the ideal $\mathcal{I}$. By $Fin$ we denote the ideal of all finite subsets of $\N$. All ideals henceforth will be assumed to be free ideals. One very prominent ideal (coming from number theory) is defined as follows.

For $m,n\in\mathbb{N}$ and $m\leq n$, let $[m, n]$ denotes the set $\{m, m+1, m+2,...,n\}$. By $|A|$ we denote the cardinality of a set $A$. The lower and the upper natural densities of $A \subset \mathbb{N}$ are defined by \cite{B1}
$$
\underline{d}(A)=\displaystyle{\liminf_{n\to\infty}}\frac{|A\cap [1,n]|}{n} ~~\mbox{and}~~
\overline{d}(A)=\displaystyle{\limsup_{n\to\infty}}\frac{|A\cap [1,n]|}{n}.
$$
If $\underline{d}(A)=\overline{d}(A)$, we say that the natural density of $A$ exists and it is denoted by $d(A)$. It is easy to check that $\iI_d = \{A \subseteq \N: d(A) = 0\}$ forms a free ideal (it is known that $\iI_d$ is also an analytic $P$-ideal just like $Fin$).

For two subsets $A, B$ of $\N$ and an ideal $\iI$, we will write
\begin{itemize}
\item[$\bullet$] $A\subseteq_\iI B$ if $A\subseteq B$ and $B\setminus A\in\iI$,
\item[$\bullet$] $A \subset \N$ is called $\iI$-translation invariant if $A+n=\{m+n \in\N: \ m\in A\}$ belongs to $\iI$ for all $n\in\Z$.
\end{itemize}
Note that for any $A\subseteq_\iI B$, $B \in \iI^+$ implies $A\in\iI^+$. $\iI$ is called translation invariant if every $A \in \iI$ is $\iI$-translation invariant and $\iI$ is called dense if for every $A\subseteq\N$ there exists $B\subseteq A$ such that $B\in\iI$.

An ideal $\iI$ on $\N$ is called a $P$-ideal if for every sequence $(A_{n})$ of sets in $\iI$ there is a set $A \in \iI$ such that $A_{n} \subset^*A$ for all $n \in \N$.  Further every ideal $\iI$ on $\N$ can be treated as a subset of the Cantor space $2^{\N}$ in view of the fact that $\mathcal{P}(\N)$ and $2^{\N}$ can be identified via the characteristic functions. An ideal $\iI$ is called analytic if it corresponds to an analytic subset of the Cantor space $2^{\N}$. A highly nontrivial theorem of Solecki (see \cite{So1}) states that each analytic $P$-ideal on $\N$ is actually the exhaustive set for some lower semicontinuous submeasure $\varphi$ on $\N$.


Next we come to the notion of convergence defined using the notion of ideals. The following notion first appeared in the work of celebrated mathematician Henry Cartan \cite{Ca1} and then again reappeared in 2000.

\textbf{Definition} (cf. \cite{KSW})
In a topological space $X$, given an ideal $\iI$, we say that a sequence $(x_n)$ is $\iI$-convergent to $x \in X$ whenever for every open set $U$ containing $x$, the set $\{n\in\N: x_n\notin U\}\in \iI$ (we will write $x_n \rightarrow x$ w.r.t $\iI$).

It is known (see \cite{KSW}) that ``$(x_n)_{n\in\N}$ is $\iI$-convergent to $x$ $\Longleftrightarrow$ there is a set $M \in \iI^*$ such that $(x_n)_{n \in M}$ is convergent to $x$" iff $\iI$ is a $P$-ideal. In the literature about ideal convergence, a very prominent role has been played by the class of analytic $P$-ideals (for example see \cite{KSW, LR} and the survey article \cite{FNS}) while these ideals had long been topics of much interest in set theory (see the excellent survey article \cite{H2} where many references can be found).

Using the notion of ideal convergence, in \cite{DG6} the ideal version of characterized subgroups (Definition 2.3) was introduced which encompasses all such notions existing in the literature and is the principal tool to generate the desired class of thin sets.

Taking in particular $\iI = Fin$ one obtains the classical notion of characterized subgroups (Definition 2.1) which generates the class of Arbault sets as a basis while for the ideal $\iI_d$, the notion of statistically characterized subgroups (Definition 2.2) comes along \cite{DDB} which in turn generates the class of statistical Arbault sets as basis \cite{DGT} (a little more detailed discussion about characterized subgroups is given in Section 2 in order to make the article more accessible to readers).

In this article our primary aim is to consider the ideal version of Arbault sets which automatically encompasses Arbault sets and its generalization, statisticlly Arbault sets as special cases. However as it happens quite often, if one considers any arbitrary ideal, then the generalization may not yield any advantage, while on the contrary some of the existing results may break down. As has been the case (again one may see \cite{KSW, LR} as examples of case study) when it comes to generalization using ideals, the most important fact that one has to keep in mind is "what type of ideals to consider" in order to obtain the desired results. Here our reference point is the ideal $\iI_d$ and the observation that it always produces a strictly bigger class of thin sets properly containing the class of Arbault sets. As a ``suitable class of ideals" containing the ideal $\iI_d$ one natural choice is the class of non-$snt$ ideals (where $snt$ stands for strongly non-translation invariant) but it is not at all obvious whether every member of this class would have the desired property.

As it turns out, in a very non-trivial way, that for every ideal coming from the class of non-$snt$ ideals with a special property, the resulting class of $\iI$-Arbault sets indeed forms a new class of trigonometric thin sets properly containing the class of classical Arbault sets. The other advantage is that unlike the class of Arbault sets, This new class is much bigger and actually contains a large subfamily of $\NN$-sets, in particular, it contains types of $\NN$-sets which have been extensively used in Fourier analysis. In this particular respect this new class seems much more beneficial than the class of Arbault sets which is only known to contain $\NZ$-sets.

Another very important takeaway is the following: In the rich theory of characterized subgroups (see \cite{DDG}) it has been known that every countable subgroup can be characterized but for uncountable Borel subgroups there have not been much clarity as also deficiency of examples of uncountable Borel subgroups which can not be characterized. In view of Theorem 2.10 one has lots of such examples.
%
%
%
\section{The novelty of $\iI$-characterized subgroups for a special class of non-$snt$ ideals}
Let us begin with the classical theory as to how one can generate historically nice subgroups of the circle via the notion of usual convergence (we would like to think it as $Fin$-convergence). These subgroups were first considered in connection with distribution of sequences of multiples of a given real number mod 1 \cite{W}. From another direction, the notion of characterized subgroups has evolved over the years as a generalization of the notion of  torsion subgroup (recall that an element $x$ of an abelian group is torsion if there exists $k \in \mathbb{N}$ such that $kx = 0$).


Thinking $k$ as a constant sequence $(k)$ one can extend the notion by considering a sequence of integers which gives rise to the notion of topologically torsion elements and corresponding subgroup. We now recall the definition of, so as to speak, a ``classical" characterized subgroup  of $\T$.

\begin{definition}
Let $(a_n)$ be a sequence of integers, the subgroup
$$
t_{(a_n)}(\T) := \{x\in \T: a_nx \to 0\mbox{ in } \T\}.
$$
of $\T$ is called {\em a characterized} $($by $(a_n))$ {\em subgroup} of $\T$ while such an element $x$ is generally called topologically torsion elements of $\T$.
\end{definition}

Even if the notion was inspired by the various (earlier)  instances (see \cite{DDB} for the details), the term ``characterized" appeared much later, coined in \cite{BDS} and since then, have been of much interest in different areas of Mathematics (one must see the excellent survey article \cite{DDG} where the rich history along with the results and references can be found).

%
%

%
%
%
The idea of, so as to speak, ``differently" characterized subgroups came into existence in \cite{DDB} to understand how the characterized subgroups behave when usual convergence (i.e., $Fin$-convergence) is replaced by a more general notion of convergence with respect to an ideal. For the starting, the ideal $\iI_d$ was considered and as a consequence using $\iI_d$-convergence (called statistical convergence in the literature) the statistically characterized subgroups of the circle were introduced in a similar fashion.
\begin{definition}\cite{DDB}
For a sequence of integers $(a_n)$, the subgroup
$$
t^s_{(a_n)}(T) := \{x\in\T : a_nx\to 0 \mbox{ statistically in } \T\}
$$	
of $\T$ is called a statistically characterized (shortly $s$-characterized) (by $(a_n)$) subgroup of $\T$.
\end{definition}

\begin{corollary}\label{lfn1}
If  $\ d(M)=1$ then for any sequence of integers $(a_n)$,  $t^s_{(a_n)_{n\in \N}}(\T) =  t^s_{(a_n)_{n\in M}}(\T)$.
\end{corollary}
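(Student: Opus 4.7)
The equality of subgroups should be verified pointwise: for a fixed $x \in \T$ we show that $(a_nx)_{n\in\N}$ converges statistically to $0$ in $\T$ if and only if $(a_nx)_{n\in M}$ does. Because statistical convergence is characterized entirely through the ``exceptional sets'' $A_\eps(x)=\{n\in\N:\|a_nx\|\geq\eps\}$ being of density zero, the whole argument reduces to a density-transfer statement between the index set $\N$ and the subsequence indexed by $M$. Write $M=\{m_1<m_2<\cdots\}$ in increasing order, and note that $d(M)=1$ gives both $k/m_k\to 1$ and, setting $K(n)=|M\cap[1,n]|$, $K(n)/n\to 1$.

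\textbf{Forward inclusion.} Assume $x\in t^s_{(a_n)_{n\in\N}}(\T)$, fix $\eps>0$, and let $A_\eps=\{n\in\N:\|a_nx\|\geq\eps\}$, so $d(A_\eps)=0$. The corresponding exceptional set along the subsequence is $B_\eps=\{k\in\N:\|a_{m_k}x\|\geq\eps\}=\{k:m_k\in A_\eps\}$. For each $K$,
\[
|B_\eps\cap[1,K]| \;=\; |A_\eps\cap\{m_1,\ldots,m_K\}|\;\leq\;|A_\eps\cap[1,m_K]|,
\]
hence $\tfrac{1}{K}|B_\eps\cap[1,K]|\leq \tfrac{|A_\eps\cap[1,m_K]|}{m_K}\cdot\tfrac{m_K}{K}\to 0$, giving $x\in t^s_{(a_n)_{n\in M}}(\T)$.

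\textbf{Reverse inclusion.} Assume $x\in t^s_{(a_n)_{n\in M}}(\T)$, fix $\eps>0$, and let $B_\eps$ be as above, so $d(B_\eps)=0$. Split $A_\eps=(A_\eps\cap M)\cup(A_\eps\setminus M)$. The second piece is contained in $\N\setminus M$, which has density $0$ by hypothesis. For the first piece, with $n\in\N$,
\[
|(A_\eps\cap M)\cap[1,n]| \;=\; |\{k\in B_\eps:m_k\leq n\}|\;\leq\;|B_\eps\cap[1,K(n)]|,
\]
so $\tfrac{1}{n}|(A_\eps\cap M)\cap[1,n]|\leq \tfrac{|B_\eps\cap[1,K(n)]|}{K(n)}\cdot\tfrac{K(n)}{n}\to 0$. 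Therefore $d(A_\eps)=0$ and $x\in t^s_{(a_n)_{n\in\N}}(\T)$.

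\textbf{Main obstacle.} There is no deep obstacle here; the substance of the corollary is purely combinatorial and hinges on the two elementary asymptotics $m_k/k\to 1$ and $K(n)/n\to 1$ extracted from $d(M)=1$. The only point requiring minor care is keeping the indexing straight while transferring density-zero estimates between $\N$ and the enumerated sequence along $M$, so that the two directions are genuinely symmetric and $\eps$ is treated uniformly.
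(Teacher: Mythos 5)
Your proof is correct, and it is also more self-contained than the argument the paper gives. The paper dispatches this corollary in one sentence by invoking the classical $P$-ideal characterization of statistical convergence: $(x_n)_{n\in\N}$ converges statistically to $x_0$ if and only if there is a set $N$ of density $1$ with $(x_n)_{n\in N}\to x_0$ in the usual sense. Applied to both $t^s_{(a_n)_{n\in\N}}(\T)$ and $t^s_{(a_n)_{n\in M}}(\T)$, this reduces the corollary to comparing ``density-one subsequence along $\N$'' with ``density-one subsequence along $M$,'' but the paper leaves the underlying density-transfer combinatorics implicit. You instead argue directly from the definition via the exceptional sets $A_\eps$ and $B_\eps$, and the asymptotics $m_K/K\to 1$ and $K(n)/n\to 1$ that you extract from $d(M)=1$ are precisely the density-transfer facts the paper's one-liner glosses over. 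What your route buys is independence from the $P$-ideal subsequence characterization (itself a nontrivial diagonal argument) and a fully explicit, symmetric treatment of the two inclusions; what the paper's route buys is brevity, since it offloads all the work onto a standard cited fact. One very minor remark: in your reverse inclusion the inequality $|(A_\eps\cap M)\cap[1,n]|\leq|B_\eps\cap[1,K(n)]|$ is in fact an equality, and the conclusion $d(A_\eps)=0$ uses subadditivity of upper density on the decomposition $A_\eps=(A_\eps\cap M)\cup(A_\eps\setminus M)$, which is correct but worth stating.
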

The result is obvious in view of the classical observation that a sequence of real numbers $(x_n)$ converges to a real number $x_0$ statistically if and only if there exists a set $M\in\mathcal{I}_d^\ast$ such that $(x_n)_{n\in M}$ usually converges to $x_0$.

Recall that an increasing sequence of naturals $(u_n)$ is called an arithmetic sequence if $u_n$ divides $u_{n+1}$ for all $n\in\N$. An arithmetic sequence $(u_n)$ is called $q$-bounded ($q$-divergent) if the sequence of ratios $(q_n)$ defined by $q_n=\frac{u_n}{u_{n-1}}$ is bounded (divergent).

In \cite{DDB} while developing the theory of $s$-characterized subgroups it was established that for any arithmetic sequence $(a_n)$, the $s$-characterized subgroup $t^s_{(a_n)}(\T)$ is always Borel, of size $\mathfrak c$ and strictly larger than the corresponding characterized subgroup $t_{(a_n)}(\T)$ (even when $t_{(a_n)}(\T)$ is uncountable). However this did not imply that using the notion of $s$-characterized subgroups one could actually obtain a new subgroup which can never be generated as in Definition 2.1., i.e., it can't be characterized by any sequence of naturals. In \cite[Theorem 2.7]{DGT} it was shown that one can indeed generate ``new" subgroups following the process of \cite{DDB}.

As a natural progression, $\iI$-characterized subgroups were considered for analytic $P$-ideals very recently in \cite{DG6} by considering $\iI$ convergence in the above definitions.
\emph{\begin{definition}
For a sequence of integers $(a_n)$, the subgroup
\begin{equation}\label{def:ideal:conv}
t^{\iI}_{(a_n)}(\T) := \{x\in \T: a_nx \to 0\  \mbox{ w.r.t }  \iI \mbox{ in } \T\}
\end{equation}
of $\T$ is called the subgroup of $\T$, $\mathcal{I}$-characterized by $(a_n)$ (shortly, an $\iI$-characterized subgroup).
\end{definition}}

Furthermore it has been identified in \cite{DG6} that there is a broad class of ideals called ``non-snt" ideals (containing the ideal $\iI_d$ and actually several classes of ideals from the literature) which have a profound impact on the behavior of the generated subgroups.

The first question that comes to mind when ideals come into picture is whether one can identify appropriate classes of ideals for which one would be able to generate new subgroups which can't be characterized by any sequence of naturals. In this section we show that class of ``non-snt" ideals with a special property do fulfil that role. It should be noted that in the absence of a density function the line of proof can not be replicated from the proof of \cite[Theorem 2.7]{DGT} and needs much more ingenuities.

\emph{\begin{definition}\label{defsnt}\cite{DG6}
An ideal $\iI$ will be called strongly non-translation invariant (in short $snt$-ideal) if it does not contain any infinite $\iI$-translation invariant set, i.e., for each infinite subset $A \in \iI$, there exists at least one $k \in \Z$ for which $A-k\not\in \iI$.
\end{definition}}
Before moving to the main result we just recall some basic facts and results from the literature which will be necessary to have a better understanding of Theorem \ref{th6}.
\begin{fact}\cite{DI1}
For any arithmetic sequence $(u_n)$ and $x\in [0,1]$, there exists a unique sequence of integers $(c_n(x))$, where $0\leq c_n(x)<q_n$, such that
\begin{equation}\label{canrep}
   x=\sum\limits_{n=1}^{\infty}\frac{c_n(x)}{u_n}
\end{equation}
and $c_n(x)<q_n-1$ for infinitely many $n$.
\end{fact}
%
%
We would write $c_n$ in place of $c_n(x)$ when there is no confusion about $x$. For $x\in [0,1]$ with canonical representation (\ref{canrep}), we define $$\supp_{(u_n)}(x)=\{n\in\N\ : \ c_n(x)\neq0\}.$$
$$\supp^q_{(u_n)}(x)=\{n\in\N\ : \ c_n(x)= q_n-1\}.$$
In \cite{DG6}, it was established that snt ideals do not generate larger subgroups than the classical one for $q$-bounded arithmetic sequences. While the result itself is correct, the  proof given thereof contains a minor gap. Specifically, it was claimed that for any $x \in \T$, the support $supp(x)$ cannot be co-finite, which is in fact may not be true. Below we provide a revised and correct proof of \cite[Theorem 3.8]{DG6}.

\begin{theorem}\label{Ifinex}
For a $q$-bounded arithmetic sequence $(a_n)$ and an analytic $P$-ideal $\iI$, $\ t^\iI_{(a_n)}(\T) = t_{(a_n)}(\T)$ if and only if $\iI$ is a $snt$-ideal.
\end{theorem}
\begin{proof}
Let $(a_n)$ be a $q$-bounded arithmetic sequence and $\iI$ be an analytic $P$-ideal. If $\iI$ is not a $snt$-ideal then $\ t^\iI_{(a_n)}(\T) \neq t_{(a_n)}(\T)$ by \cite[Theorem 3.6]{DG6}.

To prove the sufficiency part, let $\iI$ be an $snt$ analytic $P$-ideal. Clearly, we always have $t_{(a_n)}(\T) \subseteq t^\iI_{(a_n)}(\T)$. Suppose, for contradiction, that there exists $x \in t^\iI_{(a_n)}(\T)$ with $\supp(x)$ infinite. Define
\[
C = \supp(x) \setminus \supp^q(x), \qquad
D = \supp(x) \setminus (\supp(x)-1), \qquad
A = C \cup D.
\]
Since $\supp(x)$ is infinite and $\supp^q(x)$ cannot be cofinite, it follows that $A$ is infinite.

\noindent{Case 1.} $C$ is infinite. Since $\iI$ is an $snt$-ideal, by Definition~\ref{defsnt} there exists $m_1 \in \N\cup\{0\}$ such that
\[
B_1 = C - 1 - m_1 \notin \iI.
\]
For every $n \in B_1$, we have $n+m_1+1 \in \supp(x)$ and $n+m_1+1 \notin \supp^q(x)$. Hence, for all $n \in B_1$:

\[
\{a_nx\} = a_n \sum_{k=n+1}^{\infty} \frac{c_k}{a_k}
   \geq a_n \sum_{k=n+m_1+1}^{\infty} \frac{c_k}{a_k}
   \geq \frac{a_n}{a_{n+m_1+1}} \geq \frac{1}{M^{m_1+1}}.
\]

On the other hand, we also have
\begin{align*}
\{a_nx\}
&\leq a_n \left[\frac{(a_{n+1}/a_n - 1)}{a_{n+1}} + \cdots + \frac{(a_{n+m_1+1}/a_{n+m_1} - 2)}{a_{n+m_1+1}}\right]
   + a_n \sum_{k=n+m_1+2}^\infty \frac{c_k}{a_k} \\[6pt]
&\leq 1 - \frac{a_n}{a_{n+m_1+1}} \\[6pt]
&\leq 1 - \frac{1}{M^{m_1+1}}.
\end{align*}

Thus, there exists $B_1 \in \iI^+$ such that for all $n \in B_1$ we have
\[
\frac{1}{M^{m_1+1}} \leq \{a_nx\} \leq 1 - \frac{1}{M^{m_1+1}},
\]
contradicting $x \in t^\iI_{(a_n)}(\T)$.

\noindent{Case 2.} $D$ is infinite. Again, there exists $m_2 \in \N\cup\{0\}$ such that
\[
B_2 = D - 1 - m_2 \notin \iI.
\]
For all $n \in B_2$, we have $n+m_2+1 \in \supp(x)$ and $n+m_2+2 \notin \supp(x)$. Therefore, for each $n \in B_2$:

\[
\{a_nx\} = a_n \sum_{k=n+1}^{\infty} \frac{c_k}{a_k}
   \geq a_n \sum_{k=n+m_2+1}^{\infty} \frac{c_k}{a_k}
   \geq \frac{1}{M^{m_2+1}}.
\]

On the other hand,
\begin{align*}
\{a_nx\}
&\leq a_n \left[\frac{(a_{n+1}/a_n - 1)}{a_{n+1}} + \cdots + \frac{(a_{n+m_2+1}/a_{n+m_2} - 1)}{a_{n+m_2+1}}\right]
   + a_n \sum_{k=n+m_2+3}^\infty \frac{c_k}{a_k} \\[6pt]
&\leq \Bigl(1 - \frac{a_n}{a_{n+m_2+1}}\Bigr) + \frac{a_n}{a_{n+m_2+2}} \\[6pt]
&\leq 1 - \tfrac{1}{2}\cdot \frac{1}{M^{m_2+1}}.
\end{align*}

Hence, there exists $B_2 \in \iI^+$ such that for all $n \in B_2$ we have
\[
\frac{1}{M^{m_2+1}} \leq \{a_nx\} \leq 1 - \tfrac{1}{2}\cdot \frac{1}{M^{m_2+1}},
\]
which again contradicts $x \in t^\iI_{(a_n)}(\T)$. Since both cases lead to contradictions, it follows that $\supp(x)$ cannot be infinite. Thus, every $x \in t^\iI_{(a_n)}(\T)$ must have finite support, implying $x \in t_{(a_n)}(\T)$. Therefore, $t^\iI_{(a_n)}(\T) = t_{(a_n)}(\T)$.
\end{proof}
Thus, the class of snt ideals stands in sharp contrast to our suitable class, since the latter is required to generate $\iI$-characterized subgroups that cannot be characterized by any sequence of integers. Let us now consider the following property:

\begin{eqnarray*}
(\natural) \quad \text{For every infinite } A \subseteq \N, \text{ there exists an infinite } B \subseteq A \\ \text{ such that } B \in \iI \text{ and } B \text{ is } \iI\text{-translation invariant}.
\end{eqnarray*}

Accordingly, let us define the class of ideals
\[
\mathcal{H} = \bigl\{\, \iI \subseteq \mathcal{P}(\N) : \iI \text{ is a free analytic $P$-ideal and satisfies property } (\natural) \,\bigr\}.
\]
By definition, $\mathcal{H}$ contains the class of all dense translation-invariant ideals, and in particular the modular simple density ideals (for the definition see \cite{DG}). Moreover, if $\iI$ is an $snt$ ideal, then necessarily $\iI \notin \mathcal{H}$ (for examples see \cite{DG}).

The next lemma was established in \cite{DGT} to build a bridge between an arbitrary arithmetic sequence and an increasing sequence of naturals to form a potent tool to compare differently characterized subgroups with classical characterized subgroups or better Arbault sets which they generate and here again it will play a very prominent role to establish the main result of this section.

\begin{lemma}\label{lg1}\cite{DGT}
Let $(u_n)$ be an arithmetic sequence and $(a_n)$ be an increasing sequence of naturals. If $G=\{\frac{1}{u_n}:n\in\N\}\subseteq t_{(a_n)}(\T)$ then $a_n$ must be of the form $u_{k_n}v_n$ where $k_n\to\infty$ and $q_{k_n+1}$ does not divide $v_n$ for any $n\in\N$.
\end{lemma}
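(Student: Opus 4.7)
The plan is to define $k_n$ canonically and then show the unboundedness part using the rigidity of the denominators of $a_n/u_m$.

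First I would define $k_n$ to be the largest index $k$ with $u_k \mid a_n$ (adopting the harmless convention $u_0 = 1$ to cover the case that no $u_k$ with $k \ge 1$ divides $a_n$). Since $a_n$ is a fixed positive integer and $u_k$ is strictly increasing, the divisibility relation $u_k \mid a_n$ forces $u_k \le a_n$, so the maximum exists and is finite. Writing $a_n = u_{k_n} v_n$, the maximality of $k_n$ is exactly the statement $u_{k_n+1} \nmid a_n$; using $u_{k_n+1} = q_{k_n+1} u_{k_n}$, this is equivalent to $q_{k_n+1} \nmid v_n$. So the structural form of $a_n$ is immediate from the choice of $k_n$, and the whole content of the lemma is the assertion $k_n \to \infty$.

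For the divergence $k_n \to \infty$, I would use the hypothesis $\frac{1}{u_m} \in t_{(a_n)}(\T)$ for each fixed $m$. This says $\|a_n/u_m\| \to 0$ as $n \to \infty$. The key observation is that $\|a_n/u_m\|$ lies in the discrete set $\{0,\tfrac{1}{u_m},\tfrac{2}{u_m},\dots,\tfrac{\lfloor u_m/2\rfloor}{u_m}\}$, because $a_n/u_m$ has denominator dividing $u_m$. Hence for $n$ large enough one must have $\|a_n/u_m\| < \tfrac{1}{u_m}$, which forces $\|a_n/u_m\| = 0$, i.e.\ $u_m \mid a_n$. By the definition of $k_n$, this gives $k_n \ge m$ for all sufficiently large $n$. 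Since $m$ was arbitrary, $k_n \to \infty$.

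There is really only one potential pitfall worth guarding against, namely the degenerate case in which $a_n$ is not divisible by $u_1$ for small $n$; the $u_0 = 1$ convention sidesteps this, and in any event the divergence argument above shows that for every $m$ (in particular $m=1$) eventually $u_m \mid a_n$, so even without the convention the index $k_n$ is well-defined for all but finitely many $n$, which is all that is needed for the asymptotic statement. The remainder of the argument is a direct unwinding of definitions, so I do not anticipate any serious technical obstacle.
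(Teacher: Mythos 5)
Your argument is correct and, as far as one can tell, is the natural proof of this lemma; the paper does not reproduce a proof here (it is imported from \cite{DGT}), but your line of reasoning is exactly the expected one. The two ingredients you isolate are the right ones: (a) defining $k_n$ as the largest index with $u_{k_n}\mid a_n$ so that the factorization $a_n=u_{k_n}v_n$ with $q_{k_n+1}\nmid v_n$ is forced (and, as you note, unique); and (b) observing that for fixed $m$ the sequence $\|a_n/u_m\|$ takes values in the finite set $\{0,1/u_m,\dots,\lfloor u_m/2\rfloor/u_m\}$, so convergence to $0$ upgrades to eventual equality with $0$, i.e.\ eventual divisibility $u_m\mid a_n$, and hence $k_n\ge m$ for large $n$. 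Your remark about the harmless $u_0=1$ convention is the right way to handle the finitely many $n$ with $u_1\nmid a_n$, and does not affect the asymptotic claim $k_n\to\infty$. I see no gap.
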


\begin{corollary}\label{l21}\cite{DGT}
Let $G=\{\frac{1}{p^n}  : n\in\N\}$ and $(a_n)$ be an increasing sequence of naturals. If $\ G\subseteq t_{(a_n)}(\T)$ then $a_n$ must be of the form $p^{k_n}v_n$ where $k_n\to\infty$ and $p\nmid v_n$.
\end{corollary}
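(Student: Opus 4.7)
The plan is to obtain this corollary as a direct specialization of Lemma \ref{lg1}. The only thing to verify is that the sequence $u_n = p^n$ fits the framework of an arithmetic sequence as defined earlier in the section: it is an increasing sequence of naturals (for any prime $p \geq 2$) with $u_n = p^n$ dividing $u_{n+1} = p^{n+1}$, so the ratio sequence is $q_n = u_n/u_{n-1} = p$, which in particular is constant.

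With $u_n = p^n$ in hand, I would simply invoke Lemma \ref{lg1} with the hypothesis $G = \{1/p^n : n \in \N\} \subseteq t_{(a_n)}(\T)$. The lemma then guarantees that $a_n$ admits a representation $a_n = u_{k_n} v_n = p^{k_n} v_n$ with $k_n \to \infty$, and that $q_{k_n+1} \nmid v_n$ for each $n$. Since $q_{k_n+1} = p$ for every $n$, this last condition reads exactly $p \nmid v_n$, which is what the corollary asserts.

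Consequently there is no substantive obstacle here: the corollary is a verbatim rewriting of Lemma \ref{lg1} for the particular arithmetic sequence $(p^n)$ and no further argument is needed. The only ``hard'' part, if any, is purely notational, namely tracking the identification $q_{k_n+1} = p$ coming from the constancy of the ratio sequence for $(p^n)$.
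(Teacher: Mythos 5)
Your proposal is correct and matches the paper's intent exactly: the paper presents this as a corollary of Lemma~\ref{lg1} with no further proof, and your specialization to $u_n = p^n$, noting $q_n = p$ constant so that $q_{k_n+1}\nmid v_n$ becomes $p\nmid v_n$, is precisely the intended argument.
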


\begin{proposition}\label{PIsupp}\cite[Lemma 3.2]{DG6}
For any analytic $P$-ideal $\iI$ and $x\in\T$, if $supp(x) \in\iI$ and $supp(x)$ is $\iI$-translation invariant, then $x\in t^{\iI}_{(a_n)}(\T)$.
\end{proposition}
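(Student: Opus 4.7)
The plan is to exploit the canonical $(a_n)$-representation of $x$ guaranteed by the Fact above (the proposition is only meaningful when $(a_n)=(u_n)$ is arithmetic, so that $\supp(x)=\supp_{(a_n)}(x)$ is the relevant notion of support) and to bound $\|a_nx\|$ in terms of how soon after index $n$ the support of $x$ starts reappearing. Writing $x=\sum_{k\ge 1}c_k/a_k$ with $0\le c_k<q_k$, one has $a_n/a_k\in\Z$ for $k\le n$, while $a_n/a_k=1/(q_{n+1}\cdots q_k)$ for $k>n$; consequently, modulo $1$,
\[
a_n x \equiv \sum_{k>n}\frac{c_k}{q_{n+1}\cdots q_k}.
\]

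The key quantitative step is then the following. If $\supp(x)\cap\{n+1,\ldots,n+m\}=\emptyset$, the first $m$ terms of the above tail vanish, and using $c_k\le q_k-1$ together with the standard telescoping identity
\[
\sum_{k>n+m}\frac{q_k-1}{q_{n+1}\cdots q_k}=\frac{1}{q_{n+1}\cdots q_{n+m}},
\]
I would conclude $\|a_n x\|\le 1/(q_{n+1}\cdots q_{n+m})\le 2^{-m}$, using that each $q_k\ge 2$.

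Taking the contrapositive, the above estimate translates into a set-theoretic inclusion that is tailor-made for the translation-invariance hypothesis:
\[
\{n\in\N:\|a_n x\|>2^{-m}\}\subseteq \bigcup_{k=1}^{m}\bigl(\supp(x)-k\bigr).
\]
Since $\supp(x)$ is $\iI$-translation invariant, each $\supp(x)-k$ lies in $\iI$, and a finite union of members of $\iI$ is still in $\iI$. Hence, given $\eps>0$, choosing $m$ large enough that $2^{-m}<\eps$ places $\{n:\|a_n x\|>\eps\}$ inside an element of $\iI$, which is precisely the statement that $a_n x\to 0$ with respect to $\iI$ in $\T$; that is, $x\in t^{\iI}_{(a_n)}(\T)$.

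The only real content of the argument is the estimate that isolates how $\|a_nx\|$ is controlled by a finite window $\{n+1,\ldots,n+m\}$ around $\supp(x)$; $\iI$-translation invariance is then perfectly calibrated to absorb this window via closure of $\iI$ under finite unions. No serious obstacle is anticipated: the hypothesis $\supp(x)\in\iI$ is in fact subsumed by $\iI$-translation invariance (take the shift $k=0$), and the full strength of ``analytic $P$-ideal'' is not actually invoked in this particular argument---only closure of $\iI$ under finite unions and subsets is used.
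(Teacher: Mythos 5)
Your argument is correct and self-contained. Since the paper cites this result from [DG6, Lemma 3.2] without reproducing the proof, there is no in-paper proof to compare against, but your reasoning checks out at every step: the $\bmod\,1$ reduction of $a_nx$ to the tail $\sum_{k>n}c_k/(q_{n+1}\cdots q_k)$ is correct because $a_k\mid a_n$ for $k\le n$; the telescoping estimate $\sum_{k>n+m}(q_k-1)/(q_{n+1}\cdots q_k)=1/(q_{n+1}\cdots q_{n+m})\le 2^{-m}$ is valid because $q_j\ge 2$ for an increasing arithmetic sequence; and the contrapositive inclusion $\{n:\|a_nx\|>2^{-m}\}\subseteq\bigcup_{k=1}^m(\supp(x)-k)$ combined with $\iI$-translation invariance and closure of $\iI$ under finite unions gives $\iI$-convergence. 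Your two side remarks are also accurate: the hypothesis $\supp(x)\in\iI$ is indeed the $n=0$ case of the translation-invariance definition as stated in the paper, and nothing in the argument uses the analytic $P$-ideal structure beyond the bare ideal axioms; that stronger hypothesis is carried in the statement only because it is the standing assumption in [DG6], where other results in the same section do rely on the Solecki submeasure representation. You also correctly noted the implicit assumption, suppressed in the terse statement, that the sequence $(a_n)$ in $t^{\iI}_{(a_n)}(\T)$ is the same arithmetic sequence with respect to which $\supp(x)$ is computed; every application in the paper (Theorems~\ref{th6} and~\ref{th1}) uses it this way.
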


\begin{theorem}\label{th6}
For any arithmetic sequence $(u_n)$ and $\iI\in\mathcal{H}$, the subgroup $t^{\iI}_{(u_n)}(\T)$ is not an $\AR$-set.
\end{theorem}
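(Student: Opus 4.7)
The natural approach is proof by contradiction: suppose $t^{\iI}_{(u_n)}(\T)\subseteq t_{(b_n)}(\T)$ for some increasing $(b_n)\subseteq\N$. Since each $1/u_m$ has singleton support $\{m\}$, which is in $\iI$ and trivially $\iI$-translation-invariant, Proposition~\ref{PIsupp} places $G=\{1/u_m:m\in\N\}$ inside $t^{\iI}_{(u_n)}(\T)$ and hence inside $t_{(b_n)}(\T)$. Lemma~\ref{lg1} therefore forces the decomposition $b_n=u_{k_n}v_n$ with $k_n\to\infty$ and $q_{k_n+1}\nmid v_n$ for every $n$; after thinning, assume $k_n$ is strictly increasing.

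Next, using the non-$snt$ hypothesis, fix an infinite $A\in\iI$ that is $\iI$-translation-invariant. Two closure observations are crucial: every subset $B\subseteq A$ is itself $\iI$-translation-invariant (since $B+k\subseteq A+k\in\iI$), and every finite union $A_M:=\bigcup_{j=0}^{M}(A-j)$ remains $\iI$-translation-invariant and in $\iI$. Consequently any $x=\sum_m c_m/u_m$ whose support (with respect to $(u_n)$) lies in such a set automatically belongs to $t^{\iI}_{(u_n)}(\T)$ by Proposition~\ref{PIsupp}, so the task reduces to designing such an $x$ with $\|b_n x\|\not\to 0$, producing the contradiction $x\in t^{\iI}_{(u_n)}(\T)\setminus t_{(b_n)}(\T)$.

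Writing $P_m^{(n)}:=u_m/u_{k_n}$ one computes
\[
b_n x \equiv v_n\!\!\sum_{m>k_n,\,m\in\supp(x)}\frac{c_m}{P_m^{(n)}}\pmod{1},
\]
so the plan is to pass to an infinite subsequence $(n_j)$, choose $M$ large enough, and arrange that $\supp(x)\subseteq A_M$ contains a block of consecutive indices $k_{n_j}+1,\dots,k_{n_j}+L$ for each $j$. Setting $c_m=q_m-1$ on this block and invoking the telescoping identity $\sum_{m=k+1}^{k+L}(q_m-1)/P_m^{(k)}=1-1/P_{k+L}^{(k)}$ reduces $b_{n_j}x$ modulo $1$ to $-v_{n_j}/P_{k_{n_j}+L}^{(n_j)}$; since $q_{k_{n_j}+1}\mid P_{k_{n_j}+L}^{(n_j)}$ but $q_{k_{n_j}+1}\nmid v_{n_j}$ (by Lemma~\ref{lg1}), the distance to $0\in\T$ is at least $1/P_{k_{n_j}+L}^{(n_j)}$.

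The main obstacle is making this lower bound uniformly positive along an infinite $(n_j)$, and doing so while respecting the support constraint $\supp(x)\in\iI^{\mathrm{inv}}$. When $(u_n)$ is $q$-bounded the denominators satisfy $P_{k_{n_j}+L}^{(n_j)}\leq q_{\max}^L$ and the block-telescoping goes through directly once $M$ is taken large enough (relative to the gaps of $A$) to guarantee that a consecutive block of length $L$ lies in $A_M$ past infinitely many $k_n$'s. When $(u_n)$ is $q$-divergent the telescoping bound degenerates, and one must instead choose the coefficient $c_{\alpha_j}$ at the first support-index $\alpha_j>k_{n_j}$ to optimise $\|c_{\alpha_j}v_{n_j}/P_{\alpha_j}^{(n_j)}\|\pmod{1}$, passing to a further subsequence controlling the residues of $v_{n_j}$ modulo the relevant factors of $P_{\alpha_j}^{(n_j)}$ so that the optimal $c_{\alpha_j}\in\{1,\dots,q_{\alpha_j}-1\}$ can be picked effectively. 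This inductive, case-split diagonal construction — selecting $(n_j)$, the support inside $\iI^{\mathrm{inv}}$, and the digits in lockstep — is precisely where the absence of a density function (in contrast to the statistical argument of \cite[Theorem~2.7]{DGT}) forces the extra ingenuity mentioned in the preamble to the theorem.
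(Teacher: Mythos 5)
Your overall strategy matches the paper's: assume $t^{\iI}_{(u_n)}(\T)\subseteq t_{(b_n)}(\T)$, use Proposition~\ref{PIsupp} to place $\{1/u_m\}$ inside, invoke Lemma~\ref{lg1} to decompose $b_n=u_{k_n}v_n$ with $q_{k_n+1}\nmid v_n$, use the non-$snt$ hypothesis to extract an infinite $\iI$-translation-invariant $A\in\iI$, and then build a diagonalizing $x$ whose support lies in a finite union of shifts of $A$ (your observation that subsets and finite unions of shifts of $A$ remain in $\iI$ and $\iI$-translation-invariant is correct and is exactly what licenses the application of Proposition~\ref{PIsupp}). The $q$-bounded branch of your construction is fine and in fact simpler than the paper's: with $c_m=q_m-1$ on a block (even $L=1$ suffices) one gets $b_{n_j}x\equiv -v_{n_j}/q_{k_{n_j}+1}\pmod 1$, and $\|v_{n_j}/q_{k_{n_j}+1}\|\geq 1/q_{k_{n_j}+1}\geq 1/Q$, so a tail forced below $1/(2Q)$ finishes it.

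The genuine gap is the $q$-divergent branch, which is where the actual content lies and which your writeup names but does not carry out. ``Optimise $\|c_{\alpha_j}v_{n_j}/P_{\alpha_j}^{(n_j)}\|$ by controlling residues'' is a statement of the problem, not a solution; you still owe the arithmetic fact that for every $q\geq 2$ and every $l\in\{1,\dots,q-1\}$ (here $l_i\equiv v_{n_i}\bmod q_{k_{n_i}+1}$, nonzero by Lemma~\ref{lg1}) there exists a digit $c\in\{1,\dots,q-1\}$ with $\{cl/q\}\in[1/4,3/4]$, \emph{uniformly in $q$}. That uniform lower bound, not just ``some $c$ is better than $q-1$,'' is what makes the subsequence of $\|b_{n_j}x\|$ bounded away from $0$ when $q_{k_{n_j}+1}\to\infty$. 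The paper supplies precisely this: take $l_i'=q-l_i$, set $m=2l_i$ or $2l_i'$ according to which of $l_i,l_i'$ is at most $q/2$, and put $c=\lfloor q/m\rfloor$; a short computation then gives $\{cl_i/q\}\in[1/4,1/2]$ or $[1/2,3/4]$ respectively. Combined with the tail bound $a_{n_i}\sum_{r>k_{n_{i+1}}}c_r/u_r\leq a_{n_i}/u_{k_{n_{i+1}}}\leq 1/8$ (enforced by choosing the subsequence so that $u_{k_{n_{i+1}}}\geq 8a_{n_i}$), one gets $\{a_{n_i}x\}\in[1/4,7/8]$ with no case split on $(u_n)$ at all. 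So the paper's single coefficient choice subsumes both of your branches, and your $q$-bounded branch, while correct, is superfluous once you have the uniform estimate; the missing piece in your proposal is exactly that estimate.

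One small notational point: with $k_{n_j}\in A$ and support indices of the form $k_{n_j}+\ell$, $\ell\geq 1$, the relevant envelope is $\bigcup_{\ell=1}^{L}(A+\ell)$ rather than $\bigcup_{j=0}^{M}(A-j)$; the closure properties you cite apply equally to either, but the direction of the shift should match the construction.
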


\begin{proof}
Let $(u_n)$ be an arithmetic sequence. If possible assume that there exists an increasing sequence of naturals $(a_n)$ such that
$$
t^{\iI}_{(u_n)}(\T)\subseteq t_{(a_n)}(\T).
$$
Observe that for each $x\in S=\{\frac{1}{u_n}:n\in\N\}$, $supp_{(u_n)} (x)$ is finite. Then Proposition \ref{PIsupp} ensures that $S\subseteq t^{\iI}_{(u_n)}(\T)$. Therefore, in view of Lemma \ref{lg1} it follows that $a_n$ can be written as $u_{k_n}v_n$ where $k_n\to\infty$ and $q_{k_n+1}$ does not divide $v_n$ for all $n\in\N$.

Since $\iI\in\mathcal{H}$, there exists an infinite subset $A$ of $\N$ such that $A\in\iI$ and $A$ is $\iI$-translation invariant and we can choose a subsequence $(a_{n_i})$ of $(a_n)$ (with $a_{n_1}=a_1$) such that
\begin{itemize}
\item[(i)] $u_{k_{n_{(i+1)}}}\geq 8a_{n_i}$,
\item[(ii)] $k_{n_i}\in A$.
\end{itemize}
Let us define an element $x\in\T$ in the following way.
\begin{equation}\label{suppeq}
\supp_{(u_n)}(x)=\{k_{n_i}+1 \ : \ i\in\N\} \ \mbox{    and   } c_r=\bigg\lfloor \frac{q_r}{m_r} \bigg\rfloor \ \mbox{ for all } r\in \supp_{(u_n)}(x).
\end{equation}
(where $1<m_r\leq q_r$ ). Now observe that

\begin{eqnarray*}
\big\{a_{n_i}x\big\} &=& \bigg\{v_{n_i}u_{k_{n_i}}\sum\limits_{r=k_{n_i}+1}^\infty\frac{c_r}{u_r}\bigg\} \\  &=& \bigg\{v_{n_i}c_{k_{n_i}+1}\frac{u_{k_{n_i}}}{ u_{k_{n_i}+1}}+v_{n_i}u_{k_{n_i}}\sum\limits_{r=k_{n_{(i+1)}}+1}^\infty\frac{c_r}{u_r}\bigg\}
\\
\Rightarrow \ \big\{a_{n_i}x\big\} &=& \bigg\{c_{k_{n_i}+1}\frac{v_{n_i}}{ q_{k_{n_i}+1}}+a_{n_i}\sum\limits_{r=k_{n_{(i+1)}}+1}^\infty\frac{c_r}{u_r}\bigg\}.
\end{eqnarray*}
Since $q_{k_{n_i}+1}\nmid v_{n_i}$ for all $i\in\N$, we must have
\begin{equation}\label{eqli}
\bigg\{\frac{v_{n_i}}{q_{k_{n_i}+1}}\bigg\} = \frac{l_i}{q_{k_{n_i}+1}} \mbox{   for some  } l_i\in\big\{1,2,\ldots,q_{k_{n_i}+1}-1\big\}.
\end{equation}
Next let us set
\begin{equation}\label{suppeq1}
l_i'=q_{k_{n_i}+1} - l_i \ \mbox{ and }  \  m_{k_{n_i}+1}=
  \begin{cases}
   2l_i & \text{if}\ l_i\leq \frac{q_{k_{n_i}+1}}{2}, \\
   2l_i' & \text{if}\ l_i > \frac{q_{k_{n_i}+1}}{2}.
   \end{cases}
\end{equation}
\begin{itemize}
\item[$\bullet$]  For $l_i\leq \frac{q_{k_{n_i}+1}}{2}$, from (\ref{suppeq1}) we can write
\begin{eqnarray*}
c_{k_{n_i+1}} &=& \frac{q_{k_{n_i+1}}-e_i}{2l_i} \mbox{ (say)      for some  } e_i\in\{1,2,\ldots, 2l_i-1\} \\
\Rightarrow \bigg\{c_{k_{n_i}+1}\frac{v_{n_i}}{ q_{k_{n_i}+1}}\bigg\} &=& \bigg\{c_{k_{n_i}+1}\frac{l_i}{q_{k_{n_i}+1}}\bigg\} \\  &=& \bigg\{\frac{l_i c_{k_{n_i}+1}}{e_i+2l_ic_{k_{n_i}+1}}\bigg\}.
\end{eqnarray*}
Therefore we can conclude that
$$
\frac{1}{4} \leq \bigg\{c_{k_{n_i}+1}\frac{v_{n_i}}{ q_{k_{n_i}+1}}\bigg\}\leq\frac{1}{2}.
$$

\item[$\bullet$]  On the other hand when  $l_i > \frac{q_{k_{n_i}+1}}{2}$, from (\ref{suppeq1}) we can write
\begin{eqnarray*}
c_{k_{n_i+1}} &=& \frac{q_{k_{n_i+1}}-e_i'}{2l_i'} \mbox{ (say)      for some  } e_i'\in\{1,2,\ldots, 2l_i'-1\} \\
\Rightarrow \bigg\{c_{k_{n_i}+1}\frac{v_{n_i}}{ q_{k_{n_i}+1}}\bigg\} &=& \bigg\{c_{k_{n_i}+1}\frac{l_i}{q_{k_{n_i}+1}}\bigg\} \\
 &=&  \bigg\{c_{k_{n_i}+1}\frac{q_{k_{n_i}+1}-l_i'}{q_{k_{n_i}+1}}\bigg\} \\   &=&  \bigg\{c_{k_{n_i}+1}-\frac{l_i'c_{k_{n_i}+1}}{q_{k_{n_i}+1}}\bigg\} \\ &=& \bigg\{c_{k_{n_i}+1}-\frac{l_i' c_{k_{n_i}+1}}{e_i'+2l_i'c_{k_{n_i}+1}}\bigg\}
\end{eqnarray*}
from which we can conclude that
$$
\frac{1}{2} \leq \bigg\{c_{k_{n_i}+1}\frac{v_{n_i}}{ q_{k_{n_i}+1}}\bigg\}\leq\frac{3}{4}.
$$
\end{itemize}
So for both cases we obtain that
$$
\frac{1}{4} \leq \bigg\{c_{k_{n_i}+1}\frac{v_{n_i}}{ q_{k_{n_i}+1}}\bigg\}\leq\frac{3}{4}.
$$
From Property (i), we also have
$$
0\leq a_{n_i}\sum\limits_{r=k_{n_{(i+1)}}+1}^\infty\frac{c_r}{u_r}\leq \frac{a_{n_i}}{u_{k_{n_{(i+1)}}}} \leq\frac{1}{8}.
$$
Combining everything we observe that
$$
 \frac{1}{4}\leq \{a_{n_i}x\}=\bigg\{c_{k_{n_i+1}}\frac{v_{n_i}}{ q_{k_{n_i+1}}} +a_{n_i}\sum\limits_{r=k_{n_{(i+1)}}+1}^\infty\frac{c_r}{u_r}\bigg\} \leq \frac{7}{8}.
$$
This shows that $x\not\in t_{(a_n)}(\T)$. As we also have $\supp_{(u_n)}(x)\subseteq A+1\in\iI$ so Proposition \ref{PIsupp} ensures that $x\in t^{\iI}_{(u_n)}(\T)$ which is a contradiction. Consequently we must have $t^{\iI}_{(u_n)}(\T)\nsubseteq t_{(a_n)}(\T)$. Since the collection of all characterized subgroups forms a basis of the family of Arbault sets $\A$, it immediately follows that $t^{\iI}_{(u_n)}(\T)$ is not an $\AR$-set.
\end{proof}

\section{ Ideal version of Arbault sets and certain observations in respect of the family $\iI\A$ for ideals in $\mathcal{H}$}
Following the line of \cite{DGT} it is natural to introduce the notion of a $\iI$-Arbault set (in short $\iAR$-set) which is our prime interest in this article.
\begin{definition}
A set $X\subseteq [0,1]$ is called a $\iI$-Arbault set ($\iAR$-set in short) if there exists an increasing sequence of naturals $(a_n)$ such that $\|a_nx\|$ $\iI$-converges to 0 for all $x\in X$.
\end{definition}

Throughout, the family of all $\iAR$-sets will be denoted by $\iA$.
We start with the known observation that the collection of all characterized subgroups form a $F_{\sigma\delta}$ basis of the family $\A$ and also the collection of all $s$-characterized subgroups form a $F_{\sigma\delta}$ basis of the family $s\A$. Along the same line we have the following.

\begin{proposition}\label{p1}
The family $\iA$ has a $F_{\sigma\delta}$ basis consisting of $\iI$-characterized subgroups of $\T$.
\end{proposition}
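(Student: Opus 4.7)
The plan is to handle the two assertions of the proposition separately: first, that every member of $\iA$ is contained in some $\iI$-characterized subgroup (the basis property), and second, that every $\iI$-characterized subgroup is an $F_{\sigma\delta}$ subset of $\T$ (the descriptive-set-theoretic complexity).

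The basis part is essentially tautological. If $X\in\iA$, the very definition of an $\iI$-Arbault set supplies an increasing sequence $(a_n)$ of naturals such that $\|a_nx\|\to 0$ w.r.t.\ $\iI$ for every $x\in X$, which is exactly the inclusion $X\subseteq t^{\iI}_{(a_n)}(\T)$; and $t^{\iI}_{(a_n)}(\T)$ itself lies in $\iA$, witnessed by the same $(a_n)$. So the collection of all $\iI$-characterized subgroups of $\T$ is a basis of $\iA$.

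For the descriptive complexity I would appeal to Solecki's representation (already invoked in the excerpt): since $\iI$ is an analytic $P$-ideal, $\iI=\mathrm{Exh}(\varphi)$ for some lower semicontinuous submeasure $\varphi$ on $\N$, so $A\in\iI$ iff $\lim_N\varphi(A\setminus[1,N])=0$. Replacing the non-strict thresholds $\|a_nx\|\ge 1/k$ by the equivalent (over all $k$) strict thresholds $\|a_nx\|>1/k$, the $\iI$-convergence condition unfolds as
$$
t^{\iI}_{(a_n)}(\T)\;=\;\bigcap_{k\in\N}\bigcap_{m\in\N}\bigcup_{N\in\N} C_{k,N,m},\qquad C_{k,N,m}\;=\;\{x\in\T:\varphi(\{n>N:\|a_nx\|>1/k\})\le 1/m\}.
$$
Once each $C_{k,N,m}$ is shown to be closed in $\T$, the outer union is $F_\sigma$ and the two outer intersections deliver the required $F_{\sigma\delta}$ conclusion.

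The main obstacle, and really the only non-routine step, is the closedness of $C_{k,N,m}$. I would deduce it from the lower semicontinuity on $\T$ of the map $\Phi_{k,N}(x)=\varphi(\{n>N:\|a_nx\|>1/k\})$. The choice of strict inequality is crucial: continuity of $x\mapsto\|a_nx\|$ makes $\|a_nx\|>1/k$ an open condition in $x$, so every $n$ in the defining set at $x_0$ remains in the defining set throughout a neighborhood, giving $\{n>N:\|a_nx_0\|>1/k\}\subseteq\liminf_{x\to x_0}\{n>N:\|a_nx\|>1/k\}$. Combined with monotonicity of $\varphi$ and the standard inequality $\varphi(\liminf B_m)\le\liminf\varphi(B_m)$ for lsc submeasures (obtained by applying continuity from below to the increasing chain $\bigcap_{m\ge M}B_m\uparrow\liminf B_m$), this forces $\Phi_{k,N}(x_0)\le\liminf_{x\to x_0}\Phi_{k,N}(x)$, so $C_{k,N,m}=\{\Phi_{k,N}\le 1/m\}$ is closed. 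Using the non-strict condition $\|a_nx\|\ge 1/k$ would instead have yielded a set-valued map that is only upper semicontinuous in the inclusion sense, and the argument would collapse; switching to strict inequality at the trivial cost of reindexing in $k$ is the single place where care is required.
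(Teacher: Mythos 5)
Your proof is correct, and the basis half matches the paper's argument exactly: membership of $X$ in $\iA$ is witnessed by a sequence $(a_n)$, and $X\subseteq t^{\iI}_{(a_n)}(\T)\in\iA$ is then immediate. Where the two diverge is the descriptive complexity. The paper disposes of it by citation, simply invoking \cite{DG6} for the statement that every $\iI$-characterized subgroup is $F_{\sigma\delta}$; you instead supply the underlying argument: Solecki's representation $\iI=\mathrm{Exh}(\varphi)$, the unfolding $t^{\iI}_{(a_n)}(\T)=\bigcap_k\bigcap_m\bigcup_N C_{k,N,m}$ with $C_{k,N,m}=\{x:\varphi(\{n>N:\|a_nx\|>1/k\})\le 1/m\}$, and closedness of each $C_{k,N,m}$ via the Fatou-type inequality $\varphi(\liminf_j B_j)\le\liminf_j\varphi(B_j)$, which you correctly justify from continuity of $\varphi$ along increasing chains of subsets of $\N$ (itself a consequence of $\varphi(A)=\sup_n\varphi(A\cap[1,n])$). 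Your point about replacing the closed threshold $\|a_nx\|\ge 1/k$ by the open one $\|a_nx\|>1/k$ is exactly the right care: the open condition gives $\{n>N:\|a_nx_0\|>1/k\}\subseteq\liminf_j\{n>N:\|a_nx_j\|>1/k\}$, which is the inclusion the Fatou inequality needs, whereas the closed condition only controls the $\limsup$, which is useless here. Both routes prove the same proposition; yours is self-contained and effectively reconstructs the lemma \cite{DG6} supplies, at the cost of a longer write-up, while the paper's is shorter by delegating to the reference. The implicit standing hypothesis that $\iI$ is an analytic $P$-ideal (needed for Solecki's theorem) is the same in both.
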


\begin{proof}
Let $\mathcal{G}$ denote the family of all $\iI$-characterized subgroups of the circle, i.e.,
$$
\mathcal{G}=\{t^{\iI}_{(a_n)}(\T): \ (a_n) \mbox{ is an increasing sequence of naturals}\}.
$$
Since every member $t^{\iI}_{(a_n)}(\T)$ of the family $\mathcal{G}$ is an $\iAR$-set by definition of an $\iI$-characterized subgroup, it is obvious that $\mathcal{G}$ is a subfamily of $\iA$.

Now consider any $A\in \iA$. Then there exists an increasing sequence of naturals $(b_n)$ such that $\|b_nx\|\to 0$ with respect to $\iI$ for all $x\in A$. Therefore, $A\subseteq t^{\iI}_{(b_n)}(\T)$ and we conclude that $\mathcal{G}$ is a basis for the family $\iA$. In \cite{DG6} we have shown that every $\iI$-characterized subgroup is an $F_{\sigma\delta}$ subset of $\T$. Thus $\mathcal{G}$ is a $F_{\sigma\delta}$ basis for the family $\iA$.
\end{proof}
%
%
The following results immediately follows in line of \cite{DGT}.

\begin{proposition}\label{p2}
The family $\iA$ is a family of trigonometric thin sets.
\end{proposition}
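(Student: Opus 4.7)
The plan is to verify the three thinness axioms and then the trigonometric closure, using Proposition \ref{p1} as the central tool: since $\iI$-characterized subgroups of $\T$ form a basis for $\iA$, properties of $\iA$ reduce to structural facts about these subgroups.

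First, for axiom (a), I would argue that for any $x \in \T$ the singleton $\{x\}$ is already a classical Arbault set: pick a standard witness sequence $(a_n)$ (for rational $x = p/q$ take $a_n = nq$; for irrational $x$ use Dirichlet's approximation theorem to obtain $(a_n)$ with $\|a_nx\|\to 0$). Because the ideal $\iI$ is admissible, $Fin\subseteq\iI$ and ordinary convergence implies $\iI$-convergence, so $\{x\}\in\iA$. Axiom (b) is immediate: if $(a_n)$ witnesses $X\in\iA$, then the same sequence witnesses $Y\in\iA$ for any $Y\subseteq X$, since $\|a_n x\|\to 0$ w.r.t. $\iI$ for each $x\in X$ yields the same for each $x\in Y$.

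For axiom (c), suppose for contradiction that some $X\in\iA$ contains a nondegenerate open interval $I$. By Proposition \ref{p1}, $X\subseteq t^{\iI}_{(a_n)}(\T)$ for some increasing $(a_n)$, and consequently $I\subseteq H:=t^{\iI}_{(a_n)}(\T)$. But $H$ is a subgroup of $\T$, so $H\supseteq I-I$, which is a neighborhood of $0$; thus $H$ is an open subgroup of the connected group $\T$, forcing $H=\T$. This contradicts the known fact (established in \cite{DG6}, and reflecting the classical Borel-measure-zero property of characterized subgroups extended to the $\iI$-setting) that every $\iI$-characterized subgroup $t^{\iI}_{(a_n)}(\T)$ is a proper Borel subgroup of $\T$ of Haar measure zero. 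Hence no $X\in\iA$ can contain an interval.

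Finally, the trigonometric closure follows immediately from the subgroup structure: if $A\in\iA$ is witnessed by $(a_n)$, then $A\subseteq t^{\iI}_{(a_n)}(\T)$, and since $t^{\iI}_{(a_n)}(\T)$ is a subgroup of $\T$, we get $A-A\subseteq t^{\iI}_{(a_n)}(\T)$, so $(a_n)$ witnesses $A-A\in\iA$. The only step requiring care is (c), and the main obstacle there is the appeal to $H\neq\T$; this is where one genuinely uses that $\iI$ is admissible (and analytic $P$, so that Proposition \ref{p1} applies), together with a measure-theoretic or equidistribution argument already in \cite{DG6} to rule out $t^{\iI}_{(a_n)}(\T)=\T$. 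Given these inputs, all four verifications run in parallel with the Arbault/statistical Arbault cases of \cite{DGT}.
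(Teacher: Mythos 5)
Your proof is correct and takes essentially the route the paper itself indicates by its remark that these results ``immediately follow in line of \cite{DGT}'': axioms (a) and (b) reduce to $\A\subseteq\iA$ (via $Fin\subseteq\iI$) and hereditariness of the definition, axiom (c) follows because $\iI$-characterized subgroups of $\T$ are proper (indeed Haar-null, by the cited external result that $t^{\iI}_{(a_n)}(\T)$ differs from the null set $t_{(a_n)}(\T)$ by a null set) and an open subgroup of the connected group $\T$ would be all of $\T$, and trigonometric closure is an immediate consequence of the basis elements of Proposition \ref{p1} being subgroups. No gaps.
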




\begin{proposition}\label{p4}
If $A\in \iA$ and $G$ is a subgroup of $\T$ generated by $A$, then $G\in \iA$.
\end{proposition}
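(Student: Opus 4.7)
The plan is to invoke Proposition \ref{p1}. Since $A \in \iA$, that result supplies an increasing sequence of naturals $(a_n)$ with $A \subseteq t^{\iI}_{(a_n)}(\T)$. The key observation is that $t^{\iI}_{(a_n)}(\T)$ is genuinely a subgroup of $\T$, not merely a subset, so it must contain the subgroup $G$ generated by $A$. Once the inclusion $G \subseteq t^{\iI}_{(a_n)}(\T)$ is in hand, the very same sequence $(a_n)$ witnesses that $\|a_n y\| \to 0$ with respect to $\iI$ for every $y \in G$, so $G$ qualifies as an $\iAR$-set.

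The remaining task is therefore to verify that $t^{\iI}_{(a_n)}(\T)$ is closed under the group operations of $\T$. For the inverse, $\|{-t}\| = \|t\|$ on $\T$ yields $\|a_n(-x)\| = \|a_n x\|$, so $-x \in t^{\iI}_{(a_n)}(\T)$ whenever $x$ is. For sums, the subadditivity $\|s+t\| \leq \|s\| + \|t\|$ on $\T$ gives, for any $x, y \in t^{\iI}_{(a_n)}(\T)$ and $\eps > 0$,
$$
\{n\in\N : \|a_n(x+y)\| \geq \eps\} \subseteq \{n : \|a_n x\| \geq \eps/2\} \cup \{n : \|a_n y\| \geq \eps/2\},
$$
and the right-hand side lies in $\iI$ because $\iI$ is closed under finite unions. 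By iteration, every finite $\Z$-linear combination of elements of $A$ lies in $t^{\iI}_{(a_n)}(\T)$, which is exactly the statement $G \subseteq t^{\iI}_{(a_n)}(\T)$.

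I do not foresee a genuine obstacle: the argument is purely formal and uses only the triangle inequality on $\T$ together with the defining axioms of an ideal. In particular, neither the non-$snt$ hypothesis nor the $P$-ideal property (both of which were essential in Theorem \ref{th6}) plays any role here, so Proposition \ref{p4} should hold for arbitrary ideals $\iI$ on $\N$.
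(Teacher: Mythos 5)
The paper gives no explicit proof of Proposition~\ref{p4}, stating only that it ``immediately follows in line of \cite{DGT}''; your argument fills in precisely the standard reasoning that is intended, and it is correct. One small stylistic point: appealing to Proposition~\ref{p1} is slight overkill, since the existence of an increasing sequence $(a_n)$ with $A \subseteq t^{\iI}_{(a_n)}(\T)$ is already immediate from the definitions of an $\iAR$-set and of $t^{\iI}_{(a_n)}(\T)$; the essential content you supply (and what makes the proposition work) is the verification, via $\|{-t}\|=\|t\|$ and $\|s+t\|\le\|s\|+\|t\|$ together with the ideal axioms, that $t^{\iI}_{(a_n)}(\T)$ is a subgroup of $\T$, which the paper asserts in Definition~2.3 without proof. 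Your closing remark that the argument needs neither the non-$snt$ hypothesis nor the $P$-ideal property, so the proposition holds for arbitrary ideals, is also correct and a worthwhile observation.
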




\begin{proposition}\label{p3}
For the family $\iA$, the following hold:
\begin{itemize}
 \item[(i)] It cannot have a $F_{\sigma}$ basis,
 \item[(ii)] Every basis of $\iA$ has cardinality at least $\mathfrak{c}$.
\end{itemize}
\end{proposition}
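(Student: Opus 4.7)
The plan for both parts starts from Proposition~\ref{p1}: the collection $\mathcal{G}=\{t^{\iI}_{(a_n)}(\T) : (a_n) \text{ is an increasing sequence of naturals}\}$ of $\iI$-characterized subgroups is a basis of $\iA$ consisting of $F_{\sigma\delta}$ sets. For (i), I argue by contradiction. Assume $\iA$ admits an $F_\sigma$ basis $\mathcal{G}'$. The key intermediate claim is that every $F_\sigma$ member of $\iA$ actually belongs to $\A$. Granted this claim, for any arithmetic sequence $(u_n)$ the subgroup $H:=t^{\iI}_{(u_n)}(\T)$ lies in $\iA$, hence $H\subseteq B$ for some $F_\sigma$ set $B\in\mathcal{G}'$, giving $B\in\A$; since $\A$ is hereditary, $H\in\A$, which contradicts Theorem~\ref{th6}.

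To establish the intermediate claim, write $F=\bigcup_n F_n$ with each $F_n$ closed and let $(a_n)$ witness $F\in\iA$. Using Solecki's representation $\iI=\mathrm{Exh}(\varphi)$ of the analytic $P$-ideal together with a Baire-category argument inside each Polish space $F_n$, I extract a set $M_n\in\iI^*$ such that $\|a_m x\|\to 0$ in the usual sense along $M_n$ for every $x\in F_n$. Then the $P$-ideal property furnishes a single $M\in\iI^*$ with $M\subseteq^* M_n$ for each $n$, and the subsequence $(a_m)_{m\in M}$ witnesses $F\in\A$.

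For (ii), I would produce a family $\{H_\alpha : \alpha<\mathfrak{c}\}\subseteq\iA$ of pairwise-incompatible members, meaning $H_\alpha\cup H_\beta\notin\iA$ whenever $\alpha\neq\beta$. Such a family can be built from $\mathfrak{c}$ arithmetic sequences $(u_n^\alpha)$ with sufficiently different growth and divisibility patterns, combined with Theorem~\ref{th6}-style witnesses ensuring that the subgroup generated by $H_\alpha\cup H_\beta$ fails to be $\iI$-Arbault (essentially, any putative witnessing sequence $(c_n)$ for the union would have to be compatible with two incompatible divisibility regimes, yielding a point constructed as in Theorem~\ref{th6} on which $\|c_n x\|$ fails to $\iI$-converge to $0$). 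Any basis $\mathcal{B}$ of $\iA$ must then supply distinct $B_\alpha\in\mathcal{B}$ with $H_\alpha\subseteq B_\alpha$ for every $\alpha<\mathfrak{c}$, because otherwise a common $B$ would force $H_\alpha\cup H_\beta\subseteq B\in\iA$; consequently $|\mathcal{B}|\geq\mathfrak{c}$.

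The main obstacle is the ideal-Egorov-type extraction underlying (i): for a general non-$snt$ analytic $P$-ideal $\iI$, upgrading pointwise $\iI$-convergence on a closed set to pointwise ordinary convergence along a single $M\in\iI^*$. This is where Solecki's submeasure structure and a careful Baire-category construction on the Polish pieces $F_n$ must be combined to emulate the density-one subsequence trick available in the $\iI=\iI_d$ case of \cite{DGT}, and it is here that one must genuinely exploit both the $P$-ideal and analyticity hypotheses rather than any specific density-function form.
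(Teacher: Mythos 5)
The paper does not reprove Proposition~\ref{p3}; it states that the proof carries over from \cite{DGT}. The natural argument along those lines is much more elementary than what you propose. For part~(i), since $\iA\subseteq\mathcal{L}$ (and $\iA\subseteq w\D$), an $F_\sigma$ basis of $\iA$ would consist of $F_\sigma$ Lebesgue-null sets; but $t_{(2^{2^n})}(\T)\in\A\subseteq\iA$, and Arbault's classical theorem says that this set cannot be covered by any $F_\sigma$ null set (equivalently, by an $F_\sigma$ $\wD$-set via Proposition~\ref{p6}). That is the whole proof. For part~(ii), the standard route is to invoke the existence of $\mathfrak{c}$ many perfect Dirichlet sets any two of which have a union that fails to be a $\wD$-set; since $\D\subseteq\iA\subseteq w\D$, no basis element of $\iA$ can absorb two of them, so any basis has cardinality at least $\mathfrak{c}$.

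Your proof of~(i) rests on a much stronger unproved claim, namely that every $F_\sigma$ member of $\iA$ belongs to $\A$. This is not what is needed (the simpler route via null sets suffices), and as stated it is a genuine gap: you acknowledge yourself that the ideal-Egorov extraction is ``the main obstacle,'' but you offer only a sketch. The step that fails is the uniformization: for a fixed compact $F_n$, you need one $M\in\iI^*$ along which $\|a_m x\|\to 0$ for \emph{every} $x\in F_n$. Pointwise $\iI$-convergence on a perfect set gives, for each $x$, some $M_x\in\iI^*$; but $\iI^*$ is not closed under uncountable intersections, and neither the $P$-ideal property nor Solecki's $\mathrm{Exh}(\varphi)$ representation yields such an $M$ in general. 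A Baire-category argument over $F_n$ would at best produce an $M$ working on a comeager (or dense $G_\delta$) subset of $F_n$, leaving a meager exceptional set that still has to be handled; no mechanism for this is indicated. Even the truth of the intermediate claim for arbitrary non-$snt$ analytic $P$-ideals is doubtful, so a rigorous proof would be a substantial result in its own right rather than a step in Proposition~\ref{p3}.

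Your proof of~(ii) is in the right spirit --- produce $\mathfrak{c}$ pairwise incompatible members --- but the proposed construction from ``$\mathfrak{c}$ arithmetic sequences with sufficiently different growth and divisibility patterns'' is only an outline. You would need to actually prove that $H_\alpha\cup H_\beta\notin\iA$ for $\alpha\neq\beta$, and the Theorem~\ref{th6}-style argument you gesture at only shows a single $t^{\iI}_{(u_n)}(\T)$ is not an $\AR$-set; it does not, without further work, exclude a union from $\iA$. The cleaner route is to use the stronger containment $\iA\subseteq w\D$ together with the known $\mathfrak{c}$-sized antichain of perfect $\DD$-sets inside $w\D$ (in the spirit of \cite[Corollary 8.13]{BKR}), which sidesteps the need for any new combinatorial construction.
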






\begin{corollary}\label{c2}
$\iA\subseteq \mathcal{M} \cap \mathcal{L}$ where $\mathcal{M}$ and $\mathcal{L}$ denote respectively the collection of meagre sets and sets with Lebesgue measure zero.
\end{corollary}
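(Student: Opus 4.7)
The idea is to show that every $\iAR$-set sits inside a proper Borel subgroup of $\T$ and then invoke the classical $0$--$1$ dichotomy for Borel subgroups of the circle. Let $A\in\iA$. By Proposition~\ref{p1} there is an increasing sequence of naturals $(a_n)$ with $A\subseteq H:=t^{\iI}_{(a_n)}(\T)$, and $H$ is an $F_{\sigma\delta}$ (hence Borel) subgroup of $\T$. Since $\mathcal{M}$ and $\mathcal{L}$ are hereditary, it suffices to prove $H\in\mathcal{M}\cap\mathcal{L}$.

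Next I would appeal to the well-known fact that every Borel subgroup of $\T$ is either all of $\T$ or is simultaneously meagre and Lebesgue null. The measure half is Steinhaus's theorem: if $H$ has positive Lebesgue measure, then $H-H=H$ contains a neighbourhood of $0$, forcing $H$ to be an open (and therefore clopen) subgroup of the connected group $\T$, so $H=\T$. The Baire-category half follows analogously from Pettis's lemma. The problem thus reduces to verifying $H\neq\T$.

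For this I would use Koksma's variant of Weyl equidistribution: for Lebesgue-almost every $\alpha\in\T$ the sequence $\{a_n\alpha\}$ is uniformly distributed in $[0,1]$, so $B_\alpha:=\{n\in\N:\|a_n\alpha\|>\frac{1}{4}\}$ has natural density $\frac{1}{2}$. Membership of such an $\alpha$ in $H$ would force $B_\alpha\in\iI$. When $\iI\subseteq\iI_d$ (in particular for $\iI=Fin$ or $\iI=\iI_d$) this is already absurd; for a general non-$snt$ analytic $P$-ideal one instead integrates: if $H$ were $\T$, then $\|a_n x\|\to 0$ with respect to $\iI$ pointwise on $\T$, and an $\iI$-dominated convergence argument (available for analytic $P$-ideals via the Solecki submeasure representation recalled in the introduction) would force $\int_\T\|a_n x\|\,dx\to 0$ with respect to $\iI$, contradicting the identity $\int_\T\|a_n x\|\,dx=\frac{1}{4}$ for every $n\in\N$.

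The main obstacle is precisely this last step, namely establishing $H\neq\T$ at the level of generality of arbitrary non-$snt$ analytic $P$-ideals: the Koksma density argument by itself handles only ideals contained in $\iI_d$, and pushing beyond that requires either the submeasure/integration machinery above or an alternative Fourier-type orthogonality estimate in the spirit of the classical proof that $t_{(a_n)}(\T)$ is Lebesgue null whenever $(a_n)$ is unbounded.
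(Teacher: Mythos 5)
Your structural idea is sound: reduce to a single $\iI$-characterized subgroup $H=t^{\iI}_{(a_n)}(\T)$ via Proposition~\ref{p1}, note that it is a Borel ($F_{\sigma\delta}$) subgroup of $\T$, and invoke the Steinhaus/Pettis $0$--$1$ dichotomy, so that everything hinges on showing $H\neq\T$. However, you explicitly leave that crucial step open, and the two routes you sketch for filling it do not close the gap at the generality required. The Koksma/Weyl density argument, as you yourself note, only rules out $H=\T$ when $\iI\subseteq\iI_d$, because it converts ``$B_\alpha\in\iI$'' into ``$B_\alpha$ has density $0$.'' The proposed ``$\iI$-dominated convergence'' fix is not a standard theorem; unpacking it via Solecki's submeasure $\varphi$ with $\iI=\mathrm{Exh}(\varphi)$, the needed lower bound on $\int_\T \varphi(\{n\geq N:\|a_nx\|>1/4\})\,dx$ would require a Fubini-type estimate for $\varphi$, which fails in general since $\varphi$ is only finitely subadditive and one may well have $\varphi(\{n\})\to 0$. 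So as written the argument has a genuine hole precisely where you flag the ``main obstacle.''

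The paper sidesteps this entirely. The corollary is stated as an immediate consequence ``in line of'' \cite{DGT}, and the mechanism is visible in the proof of Proposition~\ref{p5}: one cites \cite{Gh2} for the already-established fact that $\mu\bigl(t^{\iI}_{(a_n)}(\T)\setminus t_{(a_n)}(\T)\bigr)=0$ for the ideals under consideration, which together with the classical $\mu\bigl(t_{(a_n)}(\T)\bigr)=0$ gives $\mu(H)=0$ outright. From there $H\neq\T$ is automatic, and meagreness follows by Pettis exactly as you propose (or by the corresponding category analogue in \cite{Gh2}/\cite{DGT}). In short: your dichotomy scaffolding matches the intended route, but the paper discharges ``$H\neq\T$'' by appealing to an external measure-comparison result rather than proving nullity from scratch, and that citation is precisely the ingredient your proposal is missing. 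To repair your proof, replace the Koksma/integration sketch with the cited fact $\mu\bigl(t^{\iI}_{(a_n)}(\T)\setminus t_{(a_n)}(\T)\bigr)=0$ (or reprove it), after which the rest goes through.
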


From this point onward, all ideals under consideration are assumed to belong to the class $\mathcal{H}$. Our next two theorems ensure that the newly introduced  family $\iA$ is really new compared to the already investigated families of classical trigonometric thin sets (such as $\D,\mathcal{N}_0,\A,\mathcal{N}$ etc.) and provide a clear view regarding the families $\iA$, $\A$ and $\mathcal{N}$.

\begin{theorem}\label{th1}
For any arithmetic sequence $(u_n)$, the subgroup $t^{\iI}_{(u_n)}(\T)$ contains a $\NN$-set which is not an $\AR$-set.
\end{theorem}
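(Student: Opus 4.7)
I would realize an explicit set $X\subseteq t^{\iI}_{(u_n)}(\T)$ that is an $\NN$-set but, via the mechanism already developed in Theorem \ref{th6}, fails to be Arbault. Using the non-$snt$ hypothesis, fix an infinite $\iI$-translation invariant $A_0\in\iI$ and pass to a subset $A=\{\alpha_1<\alpha_2<\cdots\}\subseteq A_0$ with $\alpha_j-\alpha_{j-1}\ge j^2$ for every $j\ge 1$ (with $\alpha_0=0$). Since $\iI$ is an ideal and $A_0$ is $\iI$-translation invariant, $A$ inherits both properties. Put
$$X:=\{y\in\T:\supp_{(u_n)}(y)\subseteq A+1\}\cup\{\tfrac{1}{u_n}:n\in\N\}.$$
Every support appearing in the first family is $\iI$-translation invariant and lies in $\iI$ (translate $A$ accordingly), and $\{n\}$ trivially satisfies both, so Proposition \ref{PIsupp} gives $X\subseteq t^{\iI}_{(u_n)}(\T)$.

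\textbf{$\NN$-set witness.} Enumerate $\N\setminus A$ in increasing order as $(m_k)$, set $b_k:=u_{m_k}$, and when $m_k$ lies in the $j$-th gap $G_j:=(\alpha_{j-1},\alpha_j)\cap\N$ of $A$ assign weight $r_k:=1/j^2$. For $y$ with $\supp_{(u_n)}(y)\subseteq A+1$ the condition $m_k+1\notin A+1$ forces $c_{m_k+1}(y)=0$, and a geometric-tail estimate then produces the uniform bound
$$\|u_{m_k}y\|\le\frac{2}{\prod_{l=m_k+1}^{\alpha_j}q_l}\le 2\cdot 2^{-(\alpha_j-m_k)},$$
so $\sum_{m_k\in G_j}\|u_{m_k}y\|<2$; weighting by $1/j^2$ and summing over $j$ gives $\sum_k r_k\|u_{m_k}y\|<\infty$. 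At the same time, $\alpha_j-\alpha_{j-1}\ge j^2$ makes $\sum_k r_k=\sum_j(\alpha_j-\alpha_{j-1}-1)/j^2=\infty$. For $y=1/u_n$ the sum is trivially finite since $\|u_{m_k}/u_n\|=0$ once $m_k\ge n$. Therefore $X$ is an $\NN$-set.

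\textbf{Refuting Arbault.} Assume for contradiction that $X\subseteq t_{(a_n)}(\T)$ for some increasing $(a_n)\subseteq\N$. Because $\{1/u_n:n\in\N\}\subseteq X$, Lemma \ref{lg1} forces $a_n=u_{k_n}v_n$ with $k_n\to\infty$ and $q_{k_n+1}\nmid v_n$. I would then run the very extraction of Theorem \ref{th6}, choosing a subsequence $(n_i)$ with $u_{k_{n_{i+1}}}\ge 8a_{n_i}$ and $k_{n_i}\in A$, and build $y^*\in\T$ with $\supp_{(u_n)}(y^*)=\{k_{n_i}+1:i\in\N\}\subseteq A+1$ and coefficients $c_r=\lfloor q_r/m_r\rfloor$ exactly as in the proof of Theorem \ref{th6}. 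Then $y^*\in X$, yet the identical computation forces $\|a_{n_i}y^*\|\in[1/4,3/4]$ for every $i$, contradicting $\|a_n y^*\|\to 0$. The main obstacle is the $\NN$-set construction: producing weights $(r_k)$ that are summable against $\|u_{m_k}y\|$ \emph{uniformly} over $y\in X$ while still having $\sum r_k=\infty$; the pairing of the sparsity $\alpha_j-\alpha_{j-1}\ge j^2$ with the weights $r_k=1/j^2$ is precisely what reconciles the two demands.
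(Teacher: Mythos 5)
Your proof is correct and the overall skeleton (use the non-$snt$ hypothesis to get an infinite $\iI$-translation-invariant $A\in\iI$, feed it into the Theorem~\ref{th6} extraction to defeat any would-be Arbault witness $(a_n)$) matches the paper's, but the construction of the $\NN$-set witness is genuinely different. The paper works with the implicitly defined set
$A_{\mathrm{pap}}=\{x\in t^{\iI}_{(u_n)}(\T):\sum\frac{1}{n}|\sin u_n\pi x|<\infty\}$,
which is trivially an $\NN$-set (it is a subset of the classical $\NN$-set determined by $(u_n)$ and $r_n=1/n$); the price is that after the extraction one must \emph{verify} that the constructed element lies in $A_{\mathrm{pap}}$, and this forces an extra sparsity requirement $\sum_i 1/k_{n_i}<\infty$ on the extracted indices together with a block-by-block estimate of $\sum\frac{1}{n}|\sin u_n\pi x|$. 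You instead build a concrete Cantor-type witness $X$ consisting of all $y$ supported in $A+1$ (together with the points $1/u_n$ needed to trigger Lemma~\ref{lg1}), having first sparsified $A$ so that its $j$-th gap has length at least $j^2$. The $\NN$-property is then obtained not along $(u_n)$ with weights $1/n$ but along the subsequence $(u_{m_k})_{m_k\notin A}$ with block-constant weights $1/j^2$; the geometric tail bound $\|u_{m_k}y\|\le 2^{-(\alpha_j-m_k)}$ makes each block contribute $O(1)$, which the weights $1/j^2$ sum, while the gap length $\ge j^2$ makes $\sum r_k$ diverge. This buys you a cleaner non-Arbault step: any element produced by the Theorem~\ref{th6} extraction with support inside $A+1$ lies in $X$ automatically, so no separate membership estimate (and no condition (iii)) is needed. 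Two small remarks. First, you write $\|a_{n_i}y^*\|\in[1/4,3/4]$, which cannot hold since $\|\cdot\|\le 1/2$; the correct statement (as in Theorem~\ref{th6}) is $\{a_{n_i}y^*\}\in[1/4,7/8]$, whence $\|a_{n_i}y^*\|\ge 1/8$, and the contradiction still stands. Second, both your argument and the paper's rely on being able to extract a subsequence with $k_{n_i}\in A$; you inherit whatever justification Theorem~\ref{th6} has for this step, and your further sparsification of $A_0$ to $A$ does not make it harder than what the paper already assumes.
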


\begin{proof}
Let us define
$$
A=\{x\in t^{\iI}_{(u_n)}(\T): \sum \frac{1}{n}|\sin u_n\pi x|<\infty\}.
$$
Considering the sequence $(r_n)$ where $r_n=\frac{1}{n}$ for all $n\in\N$, from definition it follows that $A$ is a $\NN$-set. Therefore, only fact left to be shown is that $A\not\in\A$.

Since the class of characterized subgroups forms a basis of $\A$ \cite{BDBW} so if possible assume that $A\subseteq t_{(a_n)}(\T)$ for some sequence of naturals $(a_n)$. It is easy to observe that $A$ contains $G=\{\frac{1}{u_n}:n\in\N\}$. Then Lemma \ref{lg1} ensures that the sequence $(a_n)$ will be of the form $a_n=u_{k_n}v_n$ where $k_n\to\infty$ and $q_{k_n+1}\nmid v_n$ for each $n\in\N$. We are going to show that we can always construct an element $x\in A$ depending on the sequence $(a_n)$ such that $x\not\in t_{(a_n)}(\T)$.

Since $\iI\in\mathcal{H}$, there exists an infinite subset $B$ of $\N$ such that $B\in\iI$ and $B$ is $\iI$-translation invariant and we can choose a subsequence $(a_{n_i})$ of $(a_n)$ (with $a_{n_1}=a_1$) such that
\begin{itemize}
\item[(i)] $k_{n_i}\in B$,
\item[(ii)]  $u_{k_{n_{(i+1)}}}\geq 8a_{n_i}$,
\item[(iii)] $\sum\limits_{i=1}^\infty\frac{1}{k_{n_i}}<\infty$.
\end{itemize}

Consider an element $x$ of $\T$ such that
\begin{equation}\label{suppeq2}
\supp_{(u_n)}(x)=\{k_{n_i}+1 \ : \ i\in\N\} \ \mbox{    and   } c_r=\bigg\lfloor \frac{q_r}{m_r} \bigg\rfloor \ \mbox{ for all } r\in \supp_{(u_n)}(x),
\end{equation}
where $1<m_r\leq q_r$, i.e., $x=\sum\limits_{r\in \supp_{(u_n)}(x)} \frac{c_r}{u_r}$. Now proceeding exactly as in Theorem \ref{th6}, we obtain that
$$
\Rightarrow\ \frac{1}{4}\leq \{a_{n_i}x\}=\bigg\{c_{k_{n_i+1}}\frac{v_{n_i}}{ q_{k_{n_i+1}}} +a_{n_i}\sum\limits_{r=k_{n_{(i+1)}}+1}^\infty\frac{c_r}{u_r}\bigg\} \leq \frac{7}{8}.
$$
Therefore, we get $x\not\in t_{(a_n)}(\T)$.\\

Since $k_{n_i}\in B$ and $B$ is $\iI$-translation invariant then we must have $\supp_{(u_n)}(x)\in\iI$. Consequently, Proposition \ref{PIsupp} ensures that $x\in t^{\iI}_{(u_n)}(\T)$. Let $j$ be any integer such that $k_{n_{(i-1)}}< j\leq k_{n_i}$. Note that
\begin{eqnarray*}
\{u_jx\}=u_j\sum\limits_{r=i}^\infty\frac{c_r}{u_{k_{n_r}+1}} &\leq& \frac{u_j}{u_{k_{n_i}}} \\
\Rightarrow |\sin u_j\pi x| \leq \sin \frac{\pi u_j}{u_{k_{n_i}}} &\leq& \frac{\pi u_j}{u_{k_{n_i}}}
\end{eqnarray*}
\begin{eqnarray*}
\Rightarrow \sum\limits_{j=k_{n_{(i-1)}}+1}^{k_{n_i}} \frac{|\sin u_j\pi x|}{j} &<& \pi \sum\limits_{j=k_{n_{(i-1)}}+1}^{k_{n_i}} \frac{u_j}{j u_{k_{n_i}}} \\ &<& \frac{\pi}{k_{n_{(i-1)}}} \sum\limits_{j=k_{n_{(i-1)}}+1}^{k_{n_i}} \frac{u_j}{u_{k_{n_i}}} < \frac{2\pi}{k_{n_{(i-1)}}}.
\end{eqnarray*}
Consequently we have $\sum \frac{|\sin u_n\pi x|}{n}\leq \sum\limits_{i=1}^\infty \frac{2\pi}{k_{n_i}}<\infty$ and so $x\in A$ $-$which is a contradiction since $A\subseteq t_{(a_n)}(\T)$ and we have already shown that $x\not\in t_{(a_n)}(\T)$. Thus, there does not exist any increasing sequence $(a_n)$ for which $A\subseteq t_{(a_n)}(\T)$. Therefore, $A\not\in \A$.
\end{proof}

\begin{corollary}\label{c01}
$ \iA \cap \fN \nsubseteq\A$.
\end{corollary}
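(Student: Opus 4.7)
The plan is to deduce the corollary as an immediate consequence of Theorem \ref{th1}. Since we need a witnessing ideal for the statement to be meaningful, we fix any non-$snt$ analytic $P$-ideal $\iI$ (such an ideal exists, e.g.\ $\iI_d$) and any arithmetic sequence $(u_n)$.

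First, I would invoke Theorem \ref{th1} to obtain a set $A \subseteq t^{\iI}_{(u_n)}(\T)$ which is an $\NN$-set but not an $\AR$-set. This is precisely the content of that theorem, so nothing new has to be proved at this step. Next, I would observe that $A$ automatically belongs to $\iA$: by construction $A \subseteq t^{\iI}_{(u_n)}(\T)$, so $\|u_n x\| \to 0$ with respect to $\iI$ for every $x \in A$, and hence the increasing sequence $(u_n)$ itself witnesses that $A$ is an $\iAR$-set by Definition of the family $\iA$. (Alternatively one can appeal to Proposition \ref{p2} together with the fact that $t^{\iI}_{(u_n)}(\T) \in \iA$, since $\iA$ being a family of thin sets is downward closed.)

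Combining these two facts, $A \in \iA$ and $A \in \fN$, while $A \notin \A$, which directly gives $\iA \cap \fN \nsubseteq \A$, as required.

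There is essentially no obstacle here; all the technical work has been absorbed into Theorem \ref{th1}, and the corollary is a one-line deduction from it once one notes that containment in an $\iI$-characterized subgroup witnesses membership in $\iA$.
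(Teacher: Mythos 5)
Your proof is correct and matches the paper's, which simply states that the corollary follows directly from Theorem \ref{th1}; you have merely spelled out the (routine) observation that a subset of an $\iI$-characterized subgroup lies in $\iA$.
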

\begin{proof}
The assertion follows directly from Theorem \ref{th1}.
\end{proof}

\begin{corollary}\label{c3}
$\A\subsetneq \iA$.
\end{corollary}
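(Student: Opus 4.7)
The plan is to split the corollary into the two standard halves: inclusion, then strictness.

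For the inclusion $\A \subseteq \iA$, the observation to make is simply that any free (admissible) ideal $\iI$ on $\N$ contains $Fin$, so ordinary convergence of a sequence of reals implies $\iI$-convergence of the same sequence. Therefore, given any $X \in \A$ with witnessing sequence $(a_n)$ for which $\|a_n x\| \to 0$ pointwise on $X$, the same sequence witnesses that $\|a_n x\|$ $\iI$-converges to $0$ on $X$, placing $X$ in $\iA$. This step is immediate from the definitions in Section 1 and the definition of $\iAR$-set at the start of Section 3.

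For the strictness $\A \neq \iA$, I would invoke Theorem \ref{th6} directly. Fix any arithmetic sequence $(u_n)$ (e.g.\ $u_n = n!$). Then $t^{\iI}_{(u_n)}(\T)$ is an $\iI$-characterized subgroup, hence by Proposition \ref{p1} (or just by its definition together with the definition of $\iAR$-set) it lies in $\iA$. On the other hand, Theorem \ref{th6} asserts that $t^{\iI}_{(u_n)}(\T)$ is not an $\AR$-set. Consequently $t^{\iI}_{(u_n)}(\T) \in \iA \setminus \A$, so the inclusion $\A \subseteq \iA$ is proper. (Alternatively, the $\NN$-set $A$ produced in Theorem \ref{th1} also serves as a witness, being a subset of $t^{\iI}_{(u_n)}(\T)$ which is an $\iAR$-set but not an $\AR$-set; but the minimal argument only needs Theorem \ref{th6}.)

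There is essentially no obstacle here: the heavy lifting has already been carried out in Theorem \ref{th6}, and the corollary is a one-line packaging of that result together with the trivial monotonicity of $\iI$-convergence over $Fin$-convergence. The only minor care needed is to point out that $\iI$ is assumed admissible (so $Fin \subseteq \iI$), which is standard in the setup of Section 1.
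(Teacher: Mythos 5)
Your proposal is correct and follows essentially the same route as the paper: the paper's proof of Corollary \ref{c3} is simply the one line ``follows directly from Theorem \ref{th6},'' with the inclusion $\A \subseteq \iA$ left implicit as a trivial consequence of $Fin \subseteq \iI$. You have just made that implicit step explicit, and your choice of witness for strictness (a concrete $t^{\iI}_{(u_n)}(\T)$ via Theorem \ref{th6} and Proposition \ref{p1}) matches the intended argument.
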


\begin{proof}
The assertion follows directly from Theorem \ref{th6}.
\end{proof}

\begin{corollary}\label{c9}
$\iA\nsubseteq \mathcal{N}_\sigma$.
\end{corollary}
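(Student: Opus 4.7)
My plan is to deduce Corollary \ref{c9} by combining Corollary \ref{c3} with the classical separation $\A \nsubseteq \mathcal{N}_\sigma$ from the trigonometric thin set literature.

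The first observation is that $\A \subseteq \iA$ is essentially free: for any admissible ideal $\iI$, ordinary convergence to $0$ implies $\iI$-convergence to $0$, so any sequence of naturals $(a_n)$ that witnesses an Arbault set $X$ also $\iI$-characterizes $X$, hence places $X$ in $\iA$. Corollary \ref{c3} in fact upgrades this to a proper inclusion, but here only the soft containment matters.

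The second ingredient is the classical fact (recorded for instance in the survey \cite{BKR}) that there exists an Arbault set which is not a countable union of $\mathcal{N}$-sets, i.e., $\A \nsubseteq \mathcal{N}_\sigma$. Picking any witness $X \in \A \setminus \mathcal{N}_\sigma$, the previous step gives $X \in \iA$, while $X \notin \mathcal{N}_\sigma$ by choice. Thus $X$ exhibits $\iA \setminus \mathcal{N}_\sigma \neq \emptyset$, which is exactly the claim.

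The only real obstacle is the reliance on this classical citation. A self-contained alternative would be to prove directly that the subgroup $t^{\iI}_{(u_n)}(\T)$ produced in Theorem \ref{th6} is itself not in $\mathcal{N}_\sigma$. This would be genuinely harder, because one cannot simply piggyback on $t^{\iI}_{(u_n)}(\T) \notin \A$ from Theorem \ref{th6}: the inclusion $\mathcal{N}_\sigma \subseteq \A$ fails in general, as already witnessed by Theorem \ref{th1}, which produces $\mathcal{N}$-sets outside $\A$. One would instead have to iterate the diagonal construction of Theorem \ref{th6} across countably many characterizing sequences at once, producing, for any countable family $\{(a^{(k)}_n)\}_{k\in \N}$, a single element of $t^{\iI}_{(u_n)}(\T)$ escaping every $t_{(a^{(k)}_n)}(\T)$ simultaneously. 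Invoking the classical result short-circuits this technicality.
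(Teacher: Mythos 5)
Your proof is correct and matches the paper's own argument essentially verbatim: both deduce $\iA \nsubseteq \mathcal{N}_\sigma$ from the inclusion $\A \subseteq \iA$ (via Corollary \ref{c3}) together with the classical fact $\A \nsubseteq \mathcal{N}_\sigma$ (the paper cites \cite{Ka} where you cite \cite{BKR}, but it is the same result). Your closing remarks about why a direct diagonalization inside $t^{\iI}_{(u_n)}(\T)$ would be harder are accurate but not needed for the corollary.
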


\begin{proof}
Since $\A\nsubseteq \mathcal{N}_\sigma$ \cite{Ka}, the result follows directly from Corollary \ref{c3}.\\
\end{proof}

\begin{theorem}\label{th3}
$\iA\nsubseteq \mathcal{N}\cup\A$.
\end{theorem}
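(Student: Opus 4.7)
The plan is to show that the subgroup $Z := t^{\iI}_{(u_n)}(\T)$ itself, for any arithmetic sequence $(u_n)$ and any non-$snt$ analytic $P$-ideal $\iI$, already lies in $\iA \setminus (\mathcal{N} \cup \A)$. Membership $Z \in \iA$ is immediate from the definition and $Z \notin \A$ is exactly Theorem \ref{th6}; what remains is to verify $Z \notin \mathcal{N}$.

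Toward a contradiction, suppose $Z \in \mathcal{N}$ via a witness $(a_n)$ and nonnegative weights $(r_n)$ with $\sum_n r_n = \infty$ and $\sum_n r_n\|a_n x\| < \infty$ for every $x \in Z$. Plugging in $x = 1/u_k \in Z$ yields $\sum_n r_n\|a_n/u_k\| < \infty$ for each $k$, so for every $k$ and $\delta>0$ the set $\{n : \|a_n/u_k\| \geq \delta\}$ has finite total $r$-weight. A Cantor-style diagonal argument combined with the divergence of $\sum_n r_n$ then produces an increasing subsequence $(n_i)$ along which $\|a_{n_i}/u_k\| \to 0$ for every $k$ and $\sum_i r_{n_i} = \infty$. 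Lemma \ref{lg1} applied to the increasing sequence $(a_{n_i})$ (since $\{1/u_k\}\subseteq t_{(a_{n_i})}(\T)$) forces $a_{n_i} = u_{k_{n_i}}v_{n_i}$ with $k_{n_i}\to\infty$ and $q_{k_{n_i}+1}\nmid v_{n_i}$, placing us exactly in the framework of Theorem \ref{th6}. Using non-$snt$ to fix an infinite $\iI$-translation invariant $A\in\iI$, one further sparsifies $(n_i)$ to $(m_j)$ satisfying (i) $k_{m_j}\in A$, (ii) $u_{k_{m_{j+1}}}\geq 8\,a_{m_j}$, and (iii) $\sum_j r_{m_j}=\infty$. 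Then the element $x$ defined by $\supp_{(u_n)}(x)=\{k_{m_j}+1 : j\in\N\}$ with coefficients $c_r = \lfloor q_r/m_r\rfloor$ chosen as in Theorem \ref{th6}'s proof lies in $Z$ by Proposition \ref{PIsupp} (since $\supp_{(u_n)}(x)\subseteq A+1\in\iI$ is $\iI$-translation invariant, because $A$ is), and Theorem \ref{th6}'s verbatim calculation yields $\|a_{m_j}x\|\geq 1/4$ for every $j$. Consequently $\sum_n r_n\|a_n x\| \geq \tfrac14\sum_j r_{m_j}=\infty$, contradicting $x\in Z\in\mathcal{N}$.

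The hard part will be securing (iii) alongside (i) and (ii) in the final sparsification: the constraints $k_{m_j}\in A$ and $u_{k_{m_{j+1}}}\geq 8\,a_{m_j}$ may force $(m_j)$ to be very sparse, which is a priori in tension with keeping $\sum_j r_{m_j}$ divergent. I expect this to be settled by a case split on whether $\sum_{n:\,k_n\in A} r_n$ is infinite (the easy case, where a greedy intersection of the Cantor-diagonal blocks with $\{n:k_n\in A\}$ still carries the divergent $r$-mass and the growth condition is fulfilled automatically from $k_{n_i}\to\infty$) or finite (handled either by replacing $A$ with another $\iI$-translation invariant set in $\iI$, which is abundantly available when $\iI$ is translation invariant as in the case $\iI=\iI_d$, or by building $x$ with a different support whose membership in $t^{\iI}_{(u_n)}(\T)$ is verified directly from the canonical representation rather than through Proposition \ref{PIsupp}).
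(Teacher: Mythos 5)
Your proposal takes a genuinely different route from the paper, and unfortunately it has a real gap that you yourself flag but do not actually close.

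The paper's proof is much shorter and avoids the combinatorial difficulty entirely: it picks the single subgroup $A=t^{\iI}_{(2^{2^n})}(\T)$, applies Theorem~\ref{th6} to get $A\notin\A$, and then deduces $A\notin\mathcal{N}$ from two cited facts of a topological nature — $\mathcal{N}$ has an $F_\sigma$-basis, and Arbault's theorem that $t_{(2^{2^n})}(\T)$ cannot be covered by any $F_\sigma$ set. Since $t_{(2^{2^n})}(\T)\subsetneq A$, the same non-coverability holds for $A$, hence $A\notin\mathcal{N}$. No new element $x$ is constructed, no weights $(r_n)$ ever enter, and the specific super-exponential growth of $2^{2^n}$ is precisely what makes Arbault's result available. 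In contrast, you assert the much stronger (and unproved in the paper) claim that $t^{\iI}_{(u_n)}(\T)\notin\mathcal{N}$ for \emph{every} arithmetic $(u_n)$, and try to reach it by an explicit construction.

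The gap is in the final sparsification. Conditions (i)--(ii) ($k_{m_j}\in A$ and $u_{k_{m_{j+1}}}\geq 8a_{m_j}$) force you to skip ahead by an amount that depends on $a_{m_j}$, and the skip can grow extremely fast. Nothing in the hypotheses prevents the weights from dying along any sequence satisfying that growth constraint: for instance if, after the diagonalization, $r_{n_i}\sim 1/i$ while the threshold imposed by (ii) forces $m_{j+1}>2^{m_j}$, then $\sum_j r_{m_j}<\infty$ no matter how you choose the $m_j$'s. Your case split does not repair this: in the case $\sum_{n:\,k_n\in A} r_n=\infty$ you write that ``the growth condition is fulfilled automatically from $k_{n_i}\to\infty$,'' but (ii) is a quantitative constraint, not merely that $k_{n_i}$ is eventually large, and the tension with (iii) persists; in the other case the suggested remedy of ``replacing $A$ with another $\iI$-translation invariant set'' is only available for translation-invariant ideals, not for general non-$snt$ ideals, and the final alternative (``build $x$ with a different support and verify directly'') is left entirely unspecified. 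So as written the proposal is not a proof, and it is not even clear that your strengthened claim (for \emph{all} arithmetic $(u_n)$) is true; the paper deliberately confines itself to $u_n=2^{2^n}$ exactly because Arbault's $F_\sigma$-exclusion theorem supplies the key obstruction there.
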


\begin{proof}
In order to establish the result we are going to find an $A\in \iA\setminus \A$ which cannot be contained in an $F_{\sigma}$ set which in turn would imply that $A$ cannot be an $\NN$-set since the family $\mathcal{N}$ has a $F_\sigma$-basis.

Consider $A=t^{\iI}_{(2^{2^n})}(\T)$. Then Theorem \ref{th6} ensures that $A\in\iA\setminus\A$. Arbault had already shown that the set $t_{(2^{2^n})}(\T)$ cannot be contained in any $F_\sigma$ set \cite{Ar}. Since $t_{(2^{2^n})}(\T)\subsetneq t^\iI_{(2^{2^n})}(\T)=A$ (by \cite[Theorem 3.6]{DG6}), we can conclude that $A$ also cannot be contained in any $F_\sigma$ set. Therefore, $A\in \iA\setminus \mathcal{N}$.
\end{proof}

It is well known that a subfamily of $\mathcal{N}$, namely $\mathcal{N}_0$ is contained in $\A$. When it comes to the larger class $\iA$, one can again find a suitable subfamily of $\mathcal{N}$ which we denote by $\mathcal{N}^\iI$ (containing $\mathcal{N}_0$) which is contained in $\iA$.\\
\begin{definition}
$X\subseteq [0,1]$ is in $\mathcal{N}^\iI$ if there exists an increasing sequence of naturals $(a_n)$ and a decreasing sequence of positive reals $(r_n)$ with $\sum\limits_{n=1}^\infty r_n = \infty$ and $\sum\limits_{n\in A} r_n < \infty$ implies $A\in\iI$ such that $\sum\limits_{n=1}^\infty r_n \|a_nx\| < \infty$ for all $x\in X$.
\end{definition}

\begin{remark} The class $\mathcal{N}^\iI$ happens to contain certain members of the family $\mathcal{N}$, in particular $\NN$-sets constructed for counterexamples. In \cite{Ar}, in order to construct a $\NN$-set which is not an $\mathbf{A}$-set, Arbault had chosen the set $O_{(2^n)}(\T)$. Again the $\mathbf{N}'$-set $O_{(2^n)}(\T)$ was used in \cite{Ka} to give an example of a $\mathbf{N}$-set which is not a $L_0^\sigma$-set. The $\mathbf{N}'$-set $O_{(n!)}(\T)$ is a $\mathbf{N}$-set which is not $\sigma$-porus \cite{Ko} etc. (for details see also \cite{DGT}).
\end{remark}

\begin{proposition}
$\mathcal{N}^\iI\subseteq \mathcal{N}$.
\end{proposition}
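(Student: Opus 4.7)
The plan is to observe that the definition of $\mathcal{N}^\iI$ is obtained from the definition of $\mathcal{N}$ by strengthening the requirements on the witnessing sequences $(a_n)$ and $(r_n)$, rather than weakening them. Hence the containment ought to be essentially tautological, reducing to a check that every admissible witness for $\mathcal{N}^\iI$ is already an admissible witness for $\mathcal{N}$.

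Concretely, I would start by fixing an arbitrary $X \in \mathcal{N}^\iI$ and unpacking the definition: there exist an increasing sequence of naturals $(a_n)$ and a decreasing sequence of positive reals $(r_n)$ with $\sum_{n=1}^\infty r_n = \infty$, having the additional property that $\sum_{n\in A} r_n < \infty$ forces $A \in \iI$, such that $\sum_{n=1}^\infty r_n \|a_n x\| < \infty$ for every $x \in X$. To verify $X \in \mathcal{N}$, I would take these same sequences as witnesses. Indeed, an increasing sequence of naturals is in particular a sequence of naturals, and a decreasing sequence of positive reals is in particular a sequence of non-negative reals; the divergence condition $\sum r_n = \infty$ and the absolute-convergence condition $\sum r_n \|a_n x\| < \infty$ for all $x \in X$ are both explicitly present in the $\mathcal{N}^\iI$ hypothesis. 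The side condition on subsums (``$\sum_{n\in A} r_n < \infty \Rightarrow A \in \iI$'') is an extra restriction carried by $(r_n)$ but is never demanded by the definition of an $\NN$-set, so dropping it is harmless.

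There is no real obstacle here, since the passage $\mathcal{N}^\iI \to \mathcal{N}$ amounts to forgetting structure: the proof will be a one-paragraph verification. Should a reader wish for extra content, one could add the remark that the argument does not use the ideal $\iI$ at all, so the containment is uniform in the choice of $\iI$; the interest of $\mathcal{N}^\iI$ lies not in this upper bound but in the complementary containment $\mathcal{N}^\iI \subseteq \iA$ (to be established elsewhere in the section).
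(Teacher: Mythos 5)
Your proof is correct and matches what the paper implicitly intends (the proposition is stated without proof precisely because the inclusion is definitional): every witnessing pair $(a_n)$, $(r_n)$ for $\mathcal{N}^\iI$ already satisfies all the requirements of the definition of an $\NN$-set, and the extra subsum condition is simply discarded. Nothing more is needed.
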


\begin{corollary}
$\A\nsubseteq \mathcal{N}^\iI$.
\end{corollary}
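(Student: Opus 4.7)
The plan is to deduce this from the preceding Proposition ($\mathcal{N}^\iI \subseteq \mathcal{N}$) together with the classical fact that $\A \nsubseteq \mathcal{N}$, which has already been invoked in the proof of Theorem \ref{th3}. In other words, since $\mathcal{N}^\iI$ sits inside $\mathcal{N}$, any witness separating $\A$ from $\mathcal{N}$ automatically separates $\A$ from $\mathcal{N}^\iI$.

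First I would recall the Arbault construction used in Theorem \ref{th3}: the characterized subgroup $t_{(2^{2^n})}(\T)$ is an $\AR$-set (it is a classical characterized subgroup) which, by Arbault's original argument \cite{Ar}, cannot be covered by any $F_\sigma$ set. Since the family $\mathcal{N}$ admits an $F_\sigma$-basis, this forces $t_{(2^{2^n})}(\T) \notin \mathcal{N}$.

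Next I would apply the Proposition to observe that if $t_{(2^{2^n})}(\T) \in \mathcal{N}^\iI$, then $t_{(2^{2^n})}(\T) \in \mathcal{N}$, contradicting the previous paragraph. Hence $t_{(2^{2^n})}(\T) \in \A \setminus \mathcal{N}^\iI$, which exhibits an Arbault set outside $\mathcal{N}^\iI$ and yields $\A \nsubseteq \mathcal{N}^\iI$.

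There is no real obstacle: the statement is an immediate consequence of the chain $\mathcal{N}^\iI \subseteq \mathcal{N}$ together with the already-used classical fact $\A \nsubseteq \mathcal{N}$. The only point worth mentioning is that one should cite the $F_\sigma$-basis property of $\mathcal{N}$ (used in exactly the same way in Theorem \ref{th3}) to justify $t_{(2^{2^n})}(\T) \notin \mathcal{N}$, rather than trying to check the $\mathcal{N}^\iI$-condition directly (which would be considerably more awkward since it would require understanding all admissible weight sequences $(r_n)$ simultaneously).
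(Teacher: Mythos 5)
Your proof is correct and is exactly the intended argument: the paper states this corollary without proof immediately after the proposition $\mathcal{N}^\iI \subseteq \mathcal{N}$, leaving it to follow from that inclusion together with the classical fact $\A \nsubseteq \mathcal{N}$ (with the witness $t_{(2^{2^n})}(\T)$, which the paper itself uses in Theorem \ref{th3} via Arbault's non-$F_\sigma$-coverability result and the $F_\sigma$-basis property of $\mathcal{N}$). Your spelled-out version fills in precisely the details the authors left implicit.
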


\begin{theorem}\label{th2}
$\mathcal{N}^\iI\subsetneq \iA$.
\end{theorem}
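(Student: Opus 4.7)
The plan is to prove the two parts separately: first the inclusion $\mathcal{N}^\iI\subseteq\iA$, then the properness of the inclusion, which follows almost immediately from what has already been established.

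For the inclusion, let $X\in\mathcal{N}^\iI$, witnessed by an increasing sequence of naturals $(a_n)$ and a decreasing sequence of positive reals $(r_n)$ with $\sum r_n=\infty$ and the property that $\sum_{n\in A}r_n<\infty$ forces $A\in\iI$, while $\sum r_n\|a_nx\|<\infty$ for every $x\in X$. I would argue that the very same sequence $(a_n)$ witnesses $X\in\iA$. Fix $x\in X$ and $\eps>0$, and set
$$
A_\eps=\{n\in\N:\|a_nx\|\geq\eps\}.
$$
Then $\eps\sum_{n\in A_\eps}r_n\leq\sum_{n\in A_\eps}r_n\|a_nx\|\leq\sum_{n=1}^{\infty}r_n\|a_nx\|<\infty$, so $\sum_{n\in A_\eps}r_n<\infty$, whence by hypothesis $A_\eps\in\iI$. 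This is precisely the statement that $\|a_nx\|\to 0$ with respect to $\iI$, so $X\subseteq t^\iI_{(a_n)}(\T)$ and therefore $X\in\iA$.

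For the strict inclusion, I would invoke the preceding Proposition $\mathcal{N}^\iI\subseteq\mathcal{N}$ together with Theorem \ref{th3}. Indeed, Theorem \ref{th3} provides an explicit element $A=t^\iI_{(2^{2^n})}(\T)\in\iA$ which cannot lie in $\mathcal{N}$ (since $A$ fails to be contained in any $F_\sigma$ set while $\mathcal{N}$ admits an $F_\sigma$ basis). Consequently $A\in\iA\setminus\mathcal{N}\subseteq\iA\setminus\mathcal{N}^\iI$, establishing the strict inclusion.

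No real obstacle is anticipated: the inclusion is a direct Chebyshev-type estimate using the defining property of $(r_n)$, and properness is an immediate consequence of results already proved in this section. The only subtle point is to make sure the formal definition of $\mathcal{N}^\iI$ is applied exactly as stated, in particular that the series $\sum_{n\in A_\eps}r_n$ being finite is by hypothesis sufficient to conclude $A_\eps\in\iI$ (this is where the non-standard requirement in the definition of $\mathcal{N}^\iI$ is used, and it is precisely what forces the witnessing sequence $(r_n)$ to behave compatibly with the ideal $\iI$).
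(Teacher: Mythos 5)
Your proposal is correct and follows essentially the same route as the paper: establish $\mathcal{N}^\iI\subseteq\iA$ by a Chebyshev-type estimate showing that the witnessing sequence $(a_n)$ already gives $\iI$-convergence, then obtain strictness from $\mathcal{N}^\iI\subseteq\mathcal{N}$ together with Theorem~\ref{th3}. In fact your inclusion argument is slightly cleaner than the paper's, which splits off the case $\|a_nx\|\to 0$ and reasons via subsequences, whereas you verify the definition of $\iI$-convergence directly by bounding $\sum_{n\in A_\eps}r_n$ for each $\eps>0$.
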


\begin{proof}
Let $B\in \mathcal{N}^\iI$. Then there exists an increasing sequence of naturals $(a_n)$ and a decreasing sequence of positive reals $(r_n)$ with $\sum\limits_{n=1}^\infty r_n = \infty$ and $\sum\limits_{n\in A} r_n < \infty$ implies $A\in\iI$ such that $\sum\limits_{n=1}^\infty r_n \|a_nx\| < \infty$ for all $x\in B$. First we show that $B\subseteq t^\iI_{(a_n)}(\T)$.

Take any $x\in B$. From the fact $\sum\limits_{n=1}^\infty r_n \|a_nx\| < \infty$ it readily follows that $\lim\limits_{n\to\infty} r_n \|a_nx\|=0$. If $\lim\limits_{n\to\infty} \|a_nx\|=0$ then $x\in t_{(a_n)}(\T)\subseteq t^\iI_{(a_n)}(\T)$ and we are done. Now assume that $\lim\limits_{n\to\infty} \|a_nx\|\neq 0$. Consequently, there exists a subsequence $(a_{n_k})$ of $(a_n)$ such that $\|a_{n_k}x\|\geq \epsilon_0$ for some $\epsilon_0\in (0,\frac{1}{2}]$. \\

Observe that
\begin{equation}\label{eqN}
\epsilon_0\sum\limits_{k=1}^\infty r_{n_k}\ \leq \ \sum\limits_{n=1}^\infty r_{n_k}\|a_{n_k}x\| \ \leq \ \sum\limits_{n=1}^\infty r_{n}\|a_{n}x\| \ < \ \infty
\end{equation}
which shows that the series $\sum\limits_{k=1}^\infty r_{n_k}$ is convergent. Therefore we must have $$\{n_k: k\in\N\} \in\iI.$$

So we can conclude that there does not exist any subsequence $(a_{n_k})$ of $(a_n)$ with $\{n_k:k\in\N\}\not\in\iI$ for which $\|a_{n_k}x\|\geq \varepsilon$ for some $\varepsilon\in (0,\frac{1}{2}]$. Hence $(a_nx)$ must converge to 0 w.r.t. $\iI$ and so $x\in t^\iI_{(a_n)}(\T)$. Since $x$ was chosen arbitrarily, we obtain $B\subseteq t^\iI_{(a_n)}(\T)$. Therefore Proposition \ref{p1} entails that $B\in \iA$ and we get $\mathcal{N}^\iI\subseteq \iA$. Since $\mathcal{N}^\iI\subseteq \mathcal{N}$, the strictness of the inclusion follows from Theorem \ref{th3}.
\end{proof}

\begin{theorem}\label{th02}
$\mathcal{N}^\iI\nsubseteq \A$.
\end{theorem}

\begin{proof}
Let $(r_n)$ be a decreasing sequence of positive reals such that $\sum\limits_{n=1}^\infty r_n = \infty$ and $\sum\limits_{n\in A} r_n < \infty$ implies $A\in\iI$. We define
$$
B=\{x\in\T: \sum r_n|\sin p^n\pi x|<\infty\}.
$$
Clearly $B\in \mathcal{N}^\iI$. We are going to show that $B\not\in\A$.

Since the class of characterized subgroups forms a basis of $\A$ \cite{BDBW} so if possible assume that $B\subseteq t_{(a_n)}(\T)$ for some sequence of naturals $(a_n)$. It is easy to observe that $G=\{\frac{1}{p^n}:n\in\N\}\subseteq B$. Then Corollary \ref{l21} ensures that the sequence $(a_n)$ will be of the form $a_n=p^{k_n}u_n$ where $k_n\to\infty$ and $p\nmid u_n$. We are going to show that we can always construct an element $x\in B$ depending on the sequence $(a_n)$ such that $x\not\in t_{(a_n)}(\T)$.

Now, we choose a subsequence $(a_{n_i})$ of $(a_n)$ (with $a_{n_1}=a_1$) such that
\begin{itemize}
\item[(i)]  $k_{n_{i}}\geq k_{n_{(i-1)}}+(2n_{(i-1)}+1) u_{n_{(i-1)}}$,
\item[(ii)] $\sum\limits_{i=1}^\infty r_{k_{n_i}}<\infty$.
\end{itemize}

Consider an element $x$ of $\T$ such that
$$
\supp_{(p^n)}(x)=\{k_{n_i}+1: i\in\N\}
$$
and $c_t(x)=1$ for all $t\in \supp_{(p^n)}(x)$ i.e. $x=\sum\limits_{t\in \supp_{(p^n)}(x)} \frac{1}{p^t}$. Let $j$ be any integer such that $k_{n_{(i-1)}}< j\leq k_{n_i}$. Note that
$$
\{p^jx\}=p^j\sum\limits_{t=i}^\infty\frac{1}{p^{k_{n_t}+1}}\leq \frac{1}{p^{k_{n_i}-j}}
$$
$$
\Rightarrow |\sin p^j\pi x|\leq \sin \frac{\pi}{p^{k_{n_i}-j}}\leq \frac{\pi}{p^{k_{n_i}-j}}
$$
$$
\Rightarrow \sum\limits_{j=k_{n_{(i-1)}}+1}^{k_{n_i}} r_j|\sin p^j\pi x|< \pi \sum\limits_{j=k_{n_{(i-1)}}+1}^{k_{n_i}} r_j p^{k_{n_i}-j}< 2\pi r_{k_{n_{(i-1)}}}.
$$
Consequently we have $\sum r_n |\sin p^n\pi x|\leq 2\pi  \sum\limits_{i=1}^\infty r_{k_{n_i}} < \infty$ and so $x\in B$.
\\

Now observe that
$$
a_{n_i}x\equiv_{\Z} u_{n_i}\sum\limits_{t=1}^{\infty} \frac{c_{k_{n_i}+t}(x)}{p^t}
$$
$$
\Rightarrow \{a_{n_i}x\} =\bigg\{\frac{u_{n_i}}{p}\bigg\}+u_{n_i}\sum\limits_{t=b_i+1}^\infty \frac{c_{k_{n_i}+t}(x)}{p^t} \mbox{ where } b_i= k_{n_{(i+1)}}-k_{n_i}.
$$
Since $b_i>n_i u_{n_i}$ implies $p^{b_i}>n_i u_{n_i}$, we must have $\frac{u_{n_i}}{p^{b_i}}<\frac{1}{p^2}$. \\

So it is evident that
$$
\frac{1}{p}\leq \{a_{n_i}x\}\leq \frac{p-1}{p}+ \frac{u_{n_i}}{p^{b_i}}<1-\frac{p-1}{p^2}
$$
which implies $\|a_{n_i}x\|> \frac{p-1}{p^2}$. This shows that $x\not\in t_{(a_n)}(\T)$ $-$which is a contradiction since $B\subseteq t_{(a_n)}(\T)$ and we have already shown that $x\in B$. Thus, there does not exist any increasing sequence $(a_n)$ for which $B\subseteq t_{(a_n)}(\T)$. Therefore, $B\not\in \A$.
\end{proof}

Recall that an uncountable set $X$ is a Luzin set if every meager subset of $X$ is countable.

\begin{proposition}\label{p5}
$\iA \subseteq w\mathcal{D}$.
\end{proposition}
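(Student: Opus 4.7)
The plan is to adapt the classical proof that $\A\sub w\D$, replacing ordinary convergence by $\iI$-convergence throughout and invoking the $P$-ideal hypothesis at the end to descend to a genuine subsequence. Given $X\in\iA$ with witness $(a_n)$, I would first pass to the larger set $G:=t^{\iI}_{(a_n)}(\T)$; by Proposition~\ref{p1} this is an $\iI$-characterized subgroup and is $F_{\sigma\delta}$ (hence universally measurable), and since the integral appearing in the definition of $w\D$ is monotone in the set, it suffices to verify $G\in w\D$.

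Fix a positive Borel measure $\mu$ on $[0,1]$ and put $f_n(x):=|e^{2\pi i a_n x}-1|$ on $G$. From $|e^{2\pi i y}-1|\le 2\pi\|y\|$ and the definition of $G$, one has $f_n(x)\to 0$ with respect to $\iI$ for every $x\in G$, while $0\le f_n\le 2$ everywhere. The core step is an \emph{ideal bounded convergence theorem}: for a uniformly bounded Borel sequence $(g_n)$ on a finite Borel measure space, pointwise $\iI$-convergence of $g_n$ to $0$ implies $\iI$-convergence of the integrals $\int g_n\,d\mu$ to $0$. Granting this, $I_n:=\int_G f_n\,d\mu\to 0$ in $\iI$; because $\iI$ is a $P$-ideal, there is $M\in\iI^{*}$ along which $I_n\to 0$ in the ordinary sense. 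Enumerating $M=\{n_1<n_2<\cdots\}$ and setting $b_k:=a_{n_k}$ then yields an increasing sequence of naturals with $\int_G|e^{2\pi i b_k x}-1|\,d\mu\to 0$, and since $X\sub G$ the same $(b_k)$ serves $X$.

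The main obstacle is the ideal bounded convergence step. I would establish it through an Egorov-type statement available for analytic $P$-ideals: writing $\iI=\mathrm{Exh}(\varphi)$ via Solecki's representation, for every $\delta>0$ there exists Borel $Y\sub G$ with $\mu(G\setminus Y)<\delta$ on which the $\iI$-convergence is uniform, meaning $\{n:\sup_{x\in Y}g_n(x)\ge\eps\}\in\iI$ for each $\eps>0$. Splitting $\int_G g_n\,d\mu$ into its parts over $Y$ and over $G\setminus Y$, controlling the former via uniform $\iI$-convergence on $Y$ and the latter by uniform boundedness together with $\delta\to 0$, delivers the required $\iI$-convergence of the integrals. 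After this, the remaining steps---monotonicity of the integral in the set, the $P$-ideal passage from $\iI$-convergence to a subsequence, and re-enumeration of $M$---are routine.
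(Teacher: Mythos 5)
Your argument is correct in outline, but it takes a genuinely different route from the paper's. The paper leans on a structural fact cited from \cite{Gh2}: for the relevant measure $\mu$, the set $t^{\iI}_{(a_n)}(\T)\setminus t_{(a_n)}(\T)$ is $\mu$-null, so $e^{2\pi i a_n x}\to 1$ in the \emph{ordinary} sense $\mu$-a.e.\ on the $\iI$-characterized subgroup, and then the classical Dominated Convergence Theorem delivers $\int |e^{2\pi i a_n x}-1|\,d\mu\to 0$ with the very same sequence $(a_n)$. You instead keep everything at the level of $\iI$-convergence: an ideal Egorov theorem yields $\iI$-convergence of the integrals $\int_G f_n\,d\mu\to 0$, and only at the last step do you invoke the $P$-ideal characterization (already quoted in the paper from \cite{KSW}) to descend to a genuine subsequence $M\in\iI^{*}$ and produce a witness $(a_n)_{n\in M}$ for the $\wD$-definition. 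What each approach buys: yours is self-contained and does not need any structure theory of $t^{\iI}_{(a_n)}(\T)$ beyond its being Borel (hence universally measurable), and it gracefully accommodates the fact that the definition of $\wD$ allows the witnessing sequence to vary with $\mu$; the paper's route is much shorter once the cited structural result is available, and produces a single $(a_n)$ uniformly.

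One caveat you should address explicitly: the ideal Egorov theorem for analytic $P$-ideals is the heart of your ``ideal bounded convergence theorem,'' and it is not a routine observation. It is a genuine theorem and must be cited or proved; the statement you need is essentially Mrozek's ideal Egorov theorem for analytic $P$-ideals (N.~Mrozek, \emph{Ideal version of Egorov's theorem for analytic $P$-ideals}, J.~Math.~Anal.~Appl.\ 349 (2009), 452--458), whose proof indeed goes through Solecki's $\mathrm{Exh}(\varphi)$ representation as you indicate. As written, your proposal treats this as a known black box without attribution; supplying that reference (or a self-contained proof) is what turns your sketch into a complete argument. Everything after that step---the split over $Y$ and $G\setminus Y$, the $P$-ideal passage to $M\in\iI^{*}$, re-enumeration, and monotonicity of the integral over $X\subseteq G$---is routine and correct.
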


\begin{proof}
Take a $\iI$-characterized subgroup $t^\iI_{(a_n)}(\T)$ of $\T$. In \cite{Gh2} it has been shown that $t^\iI_{(a_n)}(\T)$ is closed and therefore a complete subgroup of $\T$ and $\mu(t^\iI_{(a_n)}(\T)\setminus t_{(a_n)}(\T))=0$. Therefore, $(e^{2\pi i a_nx})$ converges to $1$ $\mu$-almost every where on $t^\iI_{(a_n)}(\T)$. Since $|e^{2\pi i a_nx}|\leq 1$, in view of Dominated Convergence Theorem we observe that
$$
\lim\limits_{n\to \infty} \int_{t^\iI_{(a_n)}(\T)} |e^{2\pi i a_nx} -1|d\mu=0.
$$
Consequently $t^\iI_{(a_n)}(\T)$ is a $\wD$-set. Therefore, from Proposition \ref{p1} we conclude that $\iA \subseteq w\mathcal{D}$.
\end{proof}

\begin{corollary}\label{c09}
Under CH (the continuum hypothesis), $\iA \subsetneq w\mathcal{D}$.
\end{corollary}

\begin{proof}
Since every Luzin set which is non-meager while having strong measure zero is a $\wD$-set \cite{BKR}, Corollary \ref{c2} ensures that these sets do not belong to the family $\iA$. Thus we get $\iA \subsetneq w\mathcal{D}$.
\end{proof}



The following three corollaries give a clearer picture about the class $\iA$ as it is seen that though the classes $\D$ and $\A$ are contained in $\iA$ the same do not remain true when countable unions of $\DD$-sets and $\AR$-sets come into consideration. Further it is also noted that the class $\fN_\sigma$ is definitely not contained in the class $\iA$.

\begin{proposition}\label{p7}\cite[Corollary 8.13]{BKR}
There are two perfect $\DD$-sets whose union is not a $\wD$-set. Consequently, $\D_\sigma \nsubseteq w\mathcal{D}$.
\end{proposition}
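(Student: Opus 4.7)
Since the proposition is cited from \cite{BKR}, my plan is to sketch an approach parallel to the one found there. The goal is to exhibit two perfect $\DD$-sets $D_1, D_2 \subseteq \T$ and a positive Borel measure $\mu$ supported on $D_1 \cup D_2$ such that, for every increasing sequence $(a_n)$ of naturals,
\[
\int_{D_1 \cup D_2} |e^{2\pi i a_n x} - 1|\, d\mu \;\not\to\; 0;
\]
this gives $D_1 \cup D_2 \notin w\D$ directly from the definition of the weak Dirichlet property, and the consequence $\D_\sigma \nsubseteq w\D$ is then immediate since $D_1 \cup D_2 \in \D_\sigma$.

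First I would produce each $D_j$ ($j=1,2$) as a Cantor-type intersection
\[
D_j = \bigcap_{k\in\N} \{x \in [0,1]: \|n_k^{(j)} x\| \leq \varepsilon_k\},
\]
for suitably sparse sequences $(n_k^{(j)}) \subset \N$ and $\varepsilon_k \downarrow 0$. By construction $n_k^{(j)} x \to 0$ in $\T$ uniformly on $D_j$, so $D_j \in \D$; choosing $\varepsilon_k$ not too small relative to $1/n_k^{(j)}$ guarantees that every surviving interval at stage $k$ splits into at least two pieces at stage $k+1$, making $D_j$ perfect. I would then place a natural Cantor probability measure $\mu_j$ on $D_j$, whose Fourier coefficients admit an explicit Riesz-product-style expression, and set $\mu = \tfrac{1}{2}(\mu_1 + \mu_2)$.

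The heart of the argument, and the main obstacle, is to arrange $(n_k^{(1)})$ and $(n_k^{(2)})$ in such arithmetic ``general position'' that no single increasing integer sequence $(a_n)$ can force $\hat{\mu}(a_n) \to 1$. Using the identity $\int |e^{2\pi i m x} - 1|^2 d\mu = 2(\mu(\T) - \operatorname{Re}\hat{\mu}(m))$, the failure of the weak Dirichlet property for $D_1 \cup D_2$ is equivalent to $\hat{\mu}(a_n) \not\to 1$ for every putative witness $(a_n)$. I would exploit the fact that $|\hat{\mu}_j(m)|$ is close to $1$ only when $m$ lies in a very sparse ``good set'' of integers dictated by $(n_k^{(j)})$, and choose the two defining sequences to be so rapidly growing and multiplicatively so independent that these two good sets are essentially disjoint. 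Then any $(a_n)$ eventually lying in the good set for $\mu_1$ would be forced out of the good set for $\mu_2$, keeping $\hat{\mu}_2(a_n)$, and hence $\hat{\mu}(a_n)$, bounded away from $1$. Verifying both the two-sided Fourier estimate on each $\mu_j$ and the arithmetic-independence condition on the defining sequences is the delicate core of the construction.
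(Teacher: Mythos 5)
The statement is cited from \cite[Corollary 8.13]{BKR}; the paper offers no proof of its own, so there is nothing internal to compare your sketch against. Your Fourier-analytic reduction is correct: since $|e^{2\pi i m x}-1|^2 = 2 - 2\cos 2\pi m x$ and the integrand is bounded, a probability measure $\mu$ concentrated on $E$ witnesses $E\notin w\D$ if and only if $\hat\mu(a_n)\not\to 1$ for every increasing $(a_n)$, and with $\mu=\tfrac12(\mu_1+\mu_2)$ this is exactly the assertion that no single $(a_n)$ can satisfy $\hat\mu_j(a_n)\to 1$ for both $j=1,2$. The Cantor/Riesz-product blueprint is the natural setting, and producing a perfect $\DD$-set from a fast sequence $(n_k^{(j)})$ with thresholds $(\varepsilon_k)$ is standard.

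The genuine gap is exactly the step you yourself flag as ``the delicate core'': you do not show that the two good sets $G_j=\{m: |\hat\mu_j(m)|\ge 1-\varepsilon\}$ can be made to have finite intersection, and without that the proposal does not yield a proof. Invoking ``multiplicative independence'' of the defining sequences is not the right mechanism: the Fourier spectrum of a Riesz-product or Cantor measure built from $(n_k^{(j)})$ lives on the \emph{additive} sumset $\bigl\{\sum_k \epsilon_k n_k^{(j)} : \epsilon_k\in\{-1,0,1\},\ \text{finitely supported}\bigr\}$, so keeping $G_1\cap G_2$ small is an additive-combinatorial constraint, not a multiplicative one. What is needed, and what you have not supplied, is (i) a concrete lacunarity/interleaving condition on $(n_k^{(1)})$, $(n_k^{(2)})$ forcing their sumsets (after a tail) to be disjoint, and (ii) a verified two-sided estimate showing $|\hat\mu_j(m)|$ is bounded away from $1$ for $m$ outside a controlled neighbourhood of that sumset. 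As written, the argument is a plausible plan with the crucial quantitative lemma missing, and it cannot be confirmed to reproduce the \cite{BKR} argument; you should either carry out (i)--(ii) explicitly or, since the paper treats this as an external citation, simply point to \cite[Corollary 8.13]{BKR} as the source.
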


\begin{corollary}\label{c6}
$\D_\sigma \nsubseteq \iA$.
\end{corollary}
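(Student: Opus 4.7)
The plan is to chain together the two previously established facts Proposition~\ref{p5} and Proposition~\ref{p7}, which together essentially hand us the conclusion without further work. Proposition~\ref{p7} produces two perfect $\DD$-sets $D_1, D_2$ whose union $X := D_1 \cup D_2$ lies in $\D_\sigma$ (trivially, as a finite, hence countable, union of Dirichlet sets) but fails to be a $\wD$-set. Proposition~\ref{p5} tells us that every $\iAR$-set is a $\wD$-set.

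From this, the argument takes a single contrapositive step: if $X$ were an $\iAR$-set, then by Proposition~\ref{p5} we would have $X \in w\mathcal{D}$, contradicting the defining property of the witness coming from Proposition~\ref{p7}. Therefore $X \in \D_\sigma \setminus \iA$, which exhibits the required non-inclusion $\D_\sigma \nsubseteq \iA$.

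There is no real obstacle here; the content of the corollary is essentially the composition of the inclusion $\iA \subseteq w\mathcal{D}$ with the known failure $\D_\sigma \nsubseteq w\mathcal{D}$. The only minor thing to verify en route is that the witness from Proposition~\ref{p7}, being the union of two members of $\D$, genuinely belongs to $\D_\sigma$, which is immediate from the definition of the $\sigma$-operation on families of thin sets introduced earlier in the paper.
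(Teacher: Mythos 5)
Your argument is exactly the paper's: combine Proposition~\ref{p5} ($\iA \subseteq w\mathcal{D}$) with Proposition~\ref{p7} (a union of two $\DD$-sets outside $w\mathcal{D}$) and read off the non-inclusion. The reasoning is correct and matches the intended proof.
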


\begin{proof}
Follows directly from Proposition \ref{p5} and Proposition \ref{p7}.
\end{proof}

Our next two corollaries follow from Corollary \ref{c6} and from the fact that a $\DD$-set is an $\AR$-set as well as a $\NN$-set i.e. $\D_\sigma \subseteq \A_\sigma\cap \fN_\sigma$.

\begin{corollary}\label{c7}
$\A_\sigma \nsubseteq \iA$.
\end{corollary}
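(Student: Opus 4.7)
The plan is to derive Corollary \ref{c7} as an almost immediate consequence of the preceding Corollary \ref{c6}, using only the classical containment $\D \subseteq \A$ at the level of the $\sigma$-operation. So this is really a deduction rather than a fresh construction.

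First I would record the basic monotonicity: if $\mathcal{F}_1 \subseteq \mathcal{F}_2$ are families of thin sets, then $(\mathcal{F}_1)_\sigma \subseteq (\mathcal{F}_2)_\sigma$, directly from the definition of $(\cdot)_\sigma$ as countable unions. Applying this to the containment $\D \subseteq \A$ (which is part of the classical chain $(R1)$ recalled at the start of the paper, and which reflects the elementary fact that uniform convergence of $\|a_n x\|$ implies pointwise convergence) yields $\D_\sigma \subseteq \A_\sigma$.

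Next I would invoke Corollary \ref{c6}, which asserts $\D_\sigma \nsubseteq \iA$. This furnishes a set $X \in \D_\sigma$ with $X \notin \iA$. By the inclusion just noted, $X \in \A_\sigma$, and the witness $X$ therefore shows $\A_\sigma \nsubseteq \iA$, completing the proof.

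The only potential subtlety is a sanity check that the $X$ produced by Corollary \ref{c6} is really witnessed as a $\sigma$-union of $\DD$-sets (this is automatic, since Corollary \ref{c6} is itself obtained from Proposition \ref{p7}, which exhibits a union of two perfect $\DD$-sets that fails to be a $\wD$-set, hence \emph{a fortiori} fails to be in $\iA$ by Proposition \ref{p5}). So there is no hidden obstacle: the whole argument is a one-line transitivity after quoting $\D \subseteq \A$ and Corollary \ref{c6}.
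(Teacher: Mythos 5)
Your argument is correct and is exactly the paper's: pass from $\D \subseteq \A$ to $\D_\sigma \subseteq \A_\sigma$ by monotonicity of the $\sigma$-operation, then apply Corollary \ref{c6}. The paper states this one-line deduction immediately before Corollaries \ref{c7} and \ref{c8}, and your sanity check on the provenance of the witness set via Propositions \ref{p7} and \ref{p5} is also accurate.
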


\begin{corollary}\label{c8}
$\fN_\sigma \nsubseteq \iA$.
\end{corollary}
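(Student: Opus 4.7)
The plan is to exploit the chain of inclusions $\D_\sigma \subseteq \A_\sigma \cap \fN_\sigma$ that the author highlights just before Corollary \ref{c7}, together with Corollary \ref{c6}, which already witnesses a countable union of $\DD$-sets that fails to belong to $\iA$.

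First I would recall the classical fact that every $\DD$-set is an $\NN$-set: if $\|a_n x\|$ converges uniformly to $0$ on a set $X$, then for any decreasing sequence of positive reals $(r_n)$ with $\sum r_n = \infty$ but $r_n \to 0$ fast enough that $\sum r_n \|a_n x\| < \infty$ uniformly on $X$ (for instance, choosing $r_n = 1/n$ and noting that uniform convergence of $\|a_n x\|$ to $0$ on $X$ allows one to pass to a subsequence if necessary so that $\|a_n x\| \le 1/n^2$ uniformly, which by relabelling the witnessing sequence $(a_n)$ still gives a valid $\NN$-witness). Thus $\D \subseteq \fN$, and taking countable unions, $\D_\sigma \subseteq \fN_\sigma$.

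Next I would invoke Corollary \ref{c6} to produce a set $X \in \D_\sigma$ with $X \notin \iA$. By the inclusion $\D_\sigma \subseteq \fN_\sigma$ just recalled, this $X$ lies in $\fN_\sigma$. Hence $X$ is a witness to $\fN_\sigma \nsubseteq \iA$.

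There is no real obstacle here: the assertion is a direct consequence of Corollary \ref{c6} and the containment $\D \subseteq \fN$. The only mild subtlety is making sure $\D \subseteq \fN$ is phrased correctly, but this is a standard observation (a uniformly convergent $\|a_n x\|$ on $X$ can be thinned so that $\sum \|a_n x\|$ already converges on $X$, which in fact puts $X$ in the smaller class $\fN_0 \subseteq \fN$), so no new construction is required.
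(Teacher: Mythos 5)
Your proposal is correct and coincides with the paper's own argument: both deduce the claim from Corollary \ref{c6} together with the classical inclusion $\D \subseteq \fN$ (hence $\D_\sigma \subseteq \fN_\sigma$), which the paper explicitly records just before Corollaries \ref{c7} and \ref{c8}. The only difference is that you re-derive $\D\subseteq\fN_0\subseteq\fN$ in passing, whereas the paper treats it as known (it appears in the background chain $(R2)$).
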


\begin{proposition}\label{p6}\cite{BKR}
Every $F_\sigma$ $\wD$-set is an $\NN$-set.
\end{proposition}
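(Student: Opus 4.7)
The plan is to reduce the statement to the case of compact $\wD$-sets via the $F_\sigma$ decomposition, and then handle the compact case by a Cantor-type inductive construction driven by discrete test measures.

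First, I would write $X=\bigcup_{k\in\N} K_k$ with each $K_k$ compact (possible since $X$ is universally measurable and $F_\sigma$). Because the integrand $|e^{2\pi i a_n x}-1|$ is non-negative, the estimate $\int_{K_k} |e^{2\pi i a_n x}-1|\,d\mu \le \int_X |e^{2\pi i a_n x}-1|\,d\mu$ immediately transfers the $\wD$-property from $X$ to each $K_k$. Thus it suffices to prove the key lemma that every compact $\wD$-set is an $\NN$-set, together with a combination step for the countable union.

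For the key lemma, I would fix $\eps_j\downarrow 0$, finite $\eps_j$-nets $D_j\sub K$, and the normalized atomic measures $\mu_j=|D_j|^{-1}\sum_{d\in D_j}\delta_d$. Applying the $\wD$-property to $\mu_j$ inside the sequence already extracted for $\mu_1,\ldots,\mu_{j-1}$ gives an increasing sequence $(a_n)$ and thresholds $\eta_n\downarrow 0$ (decaying arbitrarily fast) with $\int_K |e^{2\pi i a_n x}-1|\,d\mu_j<\eta_n/|D_j|$ for $n\ge n_j$. Since each atom carries mass $1/|D_j|$, this already forces $\max_{d\in D_j} |e^{2\pi i a_n d}-1|<\eta_n$. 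The $2\pi a_n$-Lipschitz continuity of $x\mapsto e^{2\pi i a_n x}$, together with $\eps_j$ chosen small enough in the coupled inductive construction (so that $2\pi a_n\eps_j$ stays dominated by $\eta_n$), then propagates this pointwise control from the net $D_j$ to every point of $K$. Selecting weights $r_n=1/n$ and arranging $\eta_n$ to decay faster than $1/(n\log^2 n)$ secures both $\sum r_n=\infty$ and $\sum r_n\|a_nx\|<\infty$ for every $x\in K$, giving $K\in\fN$.

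For the combination, I would execute the compact-case extraction for each $K_k$ inside the sequence obtained at stage $k-1$, producing nested sequences $(a_n^{(k+1)})\sub (a_n^{(k)})$ with nested weight systems, and then take a diagonal subsequence with suitably tuned weights that simultaneously witnesses the $\NN$-property on every $K_k$ and therefore on $X=\bigcup_k K_k$. The main obstacle is the upgrade from the $\mu$-almost everywhere convergence naturally produced by the $\wD$-definition to the pointwise summability demanded by the $\NN$-definition. The discrete-measure trick handles this for a fixed finite net at a single stage, and the coupled inductive choice of $\eps_j$ paired with $a_n$ promotes it to every point of $K$ by compactness; executing this bookkeeping cleanly, while keeping $\sum r_n=\infty$ through the diagonal extraction, is the delicate part of the argument.
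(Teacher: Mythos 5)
The paper does not prove this proposition at all: it is quoted without argument from Host--M\'ela--Parreau \cite{HMP}. So there is no internal proof to compare against, and your proposal has to stand or fall on its own.

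It falls, because the compact-case lemma that carries your whole plan contains a circular quantifier dependence, and a second gap in the subsequence bookkeeping. To propagate smallness of $|e^{2\pi ia_nx}-1|$ from the net $D_j$ to an arbitrary $x\in K$ you use the Lipschitz bound $|e^{2\pi ia_nx}-1|\le|e^{2\pi ia_nd}-1|+2\pi a_n\eps_j$, which is useful only if $a_n\eps_j$ is small, and you propose to ensure this by ``choosing $\eps_j$ small enough in the coupled inductive construction so that $2\pi a_n\eps_j$ stays dominated by $\eta_n$.'' But the order of quantifiers in the $\wD$-definition runs the other way: the measures $\mu_j$ (hence the nets $D_j$, hence the scales $\eps_j$) must be fixed \emph{before} the $\wD$-property is allowed to produce the sequence $(a_n)$, which can then grow arbitrarily fast relative to every $\eps_j$ you have already committed to. There is no stage of the construction at which you are permitted to shrink $\eps_j$ in response to the size of $a_n$ without thereby changing the test measures and invalidating the sequence. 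Independently of this, the claim that you may apply the $\wD$-property ``inside the sequence already extracted for $\mu_1,\dots,\mu_{j-1}$'' is not available: the $\wD$-definition hands you \emph{some} increasing sequence for each $\mu$, not a subsequence of a prescribed one. For $K=\{0,1/2\}$, $\mu_1=\delta_0$ and $\mu_2=\delta_{1/2}$, the sequence $a_n=2n+1$ is admissible for $\mu_1$ while $\int_K|e^{2\pi ia_nx}-1|\,d\mu_2=2$ for every $n$, so no subsequence of it works for $\mu_2$; the nested-subsequence and diagonal steps are therefore unlicensed. (You can recover simultaneous control over all $\mu_j$ by integrating once against a combined measure $\sum_jc_j\mu_j$, but that does nothing for the $\eps_j$-versus-$a_n$ circularity.) A final red flag: if the propagation step did succeed, you would have $\sup_{x\in K}\|a_nx\|\to0$, i.e.\ you would have shown that every compact $\wD$-set is a $\DD$-set, a conclusion far stronger than the $\NN$-property you were aiming for and one that would largely collapse the hierarchy $\D\subsetneq\fN_0\subsetneq\A\subsetneq w\D$ on compact sets; that a net argument threatens to prove something this strong is a sign that a step must fail. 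The genuine proof in \cite{HMP} goes through spectral analysis of measures and is not of this elementary Cantor-net type.
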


\begin{corollary}\label{c5}
Every $F_\sigma$ $\iAR$-set is an $\NN$-set.
\end{corollary}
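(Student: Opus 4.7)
The statement follows by a direct two-step chaining of the two propositions immediately preceding it, so there is no real obstacle; the plan is essentially bookkeeping.

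The plan is as follows. Let $X$ be an $F_\sigma$ subset of $[0,1]$ that belongs to $\iA$. First, I would invoke Proposition \ref{p5}, which tells us that the entire family $\iA$ is contained in the class $w\mathcal{D}$ of weak Dirichlet sets. Applying this to $X$, we conclude that $X$ is a $\wD$-set. Since $X$ was assumed to be $F_\sigma$ to begin with, we have now exhibited $X$ as an $F_\sigma$ weak Dirichlet set.

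Second, I would apply Proposition \ref{p6} of Hru\v{s}\'ak--Meza-Alc\'antara--Paukner (or whatever the \cite{HMP} reference is), which asserts that every $F_\sigma$ member of $w\mathcal{D}$ is an $\NN$-set. Plugging $X$ in gives $X \in \mathcal{N}$, which is precisely the desired conclusion. No further arithmetic or sequence-construction is needed, since both nontrivial inputs, namely the containment $\iA \subseteq w\mathcal{D}$ and the $F_\sigma$ transfer from $w\mathcal{D}$ to $\mathcal{N}$, have already been established in the excerpt.

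The only thing one has to be mildly careful about is that in Proposition \ref{p6} one is using the measurability/definability of $X$ in an essential way (to be a $\wD$-set $X$ must be universally measurable, which is automatic for an $F_\sigma$ set), so there is no hidden hypothesis being skipped when one moves from $\iA$ to $w\mathcal{D}$ and then applies the $F_\sigma$ hypothesis. This, however, is routine and does not constitute a genuine obstacle.
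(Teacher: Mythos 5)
Your proposal is correct and is exactly the paper's intended argument: one simply chains Proposition \ref{p5} ($\iA \subseteq w\mathcal{D}$) with Proposition \ref{p6} (every $F_\sigma$ $\wD$-set is an $\NN$-set). Your side remark about universal measurability being automatic for $F_\sigma$ sets is accurate but not something the paper dwells on.
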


\noindent{\textbf{Acknowledgement:}} \ The first author acknowledges financial support from DST \\(SERB) through the CRG project (CRG/2022/000264) and the MATRICS project (MTR/2022/000111), the second author acknowledges support from DST(INSPIRE) through the Faculty Fellowship (DST/INSPIRE/04/2024/004079), during the tenure of which this work was undertaken.\\

\noindent{\textbf{Authorship contribution statement:}} Pratulananda Das: Writing – review and editing, Supervision, Ayan Ghosh: Writing – original draft, Writing – review and editing, Conceptualization.  \\

\noindent{\textbf{Conflict of Interest:}} The authors state that there is no conflict of interest. \\

\noindent{\textbf{Data Availability:}} Not applicable.

\end{document}